\newtheorem{thm}{Theorem}[section]
\newtheorem{cor}{Corollary}[section]
\newtheorem{prop}{Proposition}[section]
\newtheorem{rmq}{Remark}[section]
\newcounter{cnt@row}
\newcounter{cnt@col}
\newcommand*{\diagramms}[1]{%
    \setcounter{cnt@col}{0}
    \foreach \i in {#1} {%
        \setcounter{cnt@row}{0}
        \foreach \j in {1, ..., \i} {%
            \draw (\thecnt@col, \thecnt@row) rectangle (\thecnt@col+1, \thecnt@row+1);
            \node at (\thecnt@col+0.5, \thecnt@row+.5) {};
            \addtocounter{cnt@row}{1};
        }
        \addtocounter{cnt@col}{1}
    }
}
\title{Shifted domino tableaux}
\author[Z.~Chemli]%
{Zakaria Chemli}
\address[Z. Chemli] {Laboratoire d'Informatique Gaspard Monge, Universit\'e
Paris-Est Marne-la-Vall\'ee \\
5 Boulevard Descartes \\Champs-sur-Marne \\77454 Marne-la-Vall\'ee cedex 2 \\
France}
\email[Z. Chemli]{Chemli@u-pem.fr}
\begin{document}
\begin{abstract}
We introduce new combinatorial objects called the shifted domino tableaux. We prove that these objects are in bijection with pairs of shifted Young tableaux. This bijection shows that shifted domino tableaux can be seen as elements of the super shifted plactic monoid, which is the shifted analog of the super plactic monoid. We also show that the sum over all shifted domino tableaux of a fixed shape describe a product of two Q-Schur functions, and by taking a different kind of shifted domino tableaux we describe a product of two P-Schur functions.
\end{abstract}

\keywords{Domino tableaux, shifted shifted tableaux, symmetric functions}
\maketitle

\section{Introduction} 
Since their introduction by Young \cite{young1900quantitative} more than a century ago, Young tableaux have been ubiquitous combinatorial objects, that make important appearances in geometry, algebra and representation theory. They are used in the study of Schur functions that encode the characters of the irreductible representations of symmetric groups: indeed, a Schur function is the sum over all Young tableaux of a given shape of a monomial easily computed from a tableau. In $1961$, Schensted \cite{schensted1961longest} developed an algorithm to compute the maximum length of a nondecreasing subword of a given word. The output of this algorithm is a Young tableau. 
Ten years later, Knuth \cite{knuth1970permutations} defined relations that identify the words leading to the same Young tableau by the algorithm of Schensted. The words giving the same Young tableau by this algorithm form a plactic class. Thanks to Knuth, Lascoux and Schützenberger observed in \cite{Lascouxschutzenberger1981} that the plactic class of the concatenation of two words only depend on their own plactic classes. In other words, the set of all plactic classes, equivalently the set of all Young tableaux has a monoid structure, called the plactic monoid, which is of great importance for applications in representation theory and the theory of symmetric functions. The first significant application of the plactic monoid was to provide a complete proof of the Littlewood-Richardson rule \cite{lascoux2002plactic}, a combinatorial algorithm for multiplying Schur functions, which had been used for almost $50$ years before being fully understood.

By extending Young tableaux to shifted Young tableaux, Sagan \cite{sagan1987shifted} and Worley \cite{worley1984theory} developed independently a combinatorial theory of shifted Young tableaux parallel to the theory of Young tableaux. The shifted Young tableaux allow to define the P-Schur and the Q-Schur functions. These functions encode the characters of irreductible projective representations of symmetric groups. A first version of a combinatorial algorithm for multiplying Q-Schur or P-Schur functions (the shifted analog of the Littlewood-Richardson rule) was given by Stembridge in \cite{stembridge1989shifted}. A few years ago, Serrano \cite{serrano2010shifted} introduced the shifted plactic monoid, a shifted analog of the plactic monoid with similar properties, and used the shifted plactic monoid to give a new proof and a new version of the shifted Littlewood-Richardson rule.
 
The domino tableaux are another extension of Young tableaux. Carré and Leclerc in \cite{carre1995splitting} studied a bijection due to Stanton and White \cite{stanton1985schensted} between domino tableaux and pairs of Young tableaux. The authors gave an easier description of this bijection that highlights the role of the diagonals of tableaux. This method is equivalent to the approach of Fomin and Stanton \cite{fomin1998rim}. This allows them to extend the plactic monoid of Lascoux and Schützenberger to dominoes, hence, define the super plactic monoid. They show that each class of the super plactic monoid is represented by a unique domino tableau, and propose a new combinatorial description of the product of two Schur functions using domino tableaux, which gives a new expression of the Littlewood-Richardson rule coefficients in terms of domino tableaux. 
 
In this paper, we extend shifted Young tableaux to new combinatorial objects: The shifted domino tableaux, these objects can be seen as a shifted analog of domino tableaux or as an extension of shifted Young tableaux (see Figure~\ref{fg:intro}). The purpose of this extension is to develop a theory of shifted domino tableaux parallel to the theory of domino tableaux and shed lights on the combinatorial properties of Q-Schur and P-Schur functions. The paper is structured as follows. In Section 2, we give some basic definitions and a brief overview of Young tableaux \cite{Lascouxschutzenberger1981}, shifted Young tableaux \cite{serrano2010shifted}, the 2-core and 2-quotient of a partition \cite{macdonald1995symmetric} and domino tableaux \cite{carre1995splitting}. In Section 3, we define the shifted domino tableaux and prove that the set of shifted domino tableaux is in bijection with the set of pairs of shifted Young tableaux. Our proof is inspired from the one of Carré and Leclerc \cite{carre1995splitting} regarding the bijection between the set of domino tableaux and the set of pairs of Young tableaux. Finally, in Section 4, we extend the shifted plactic monoid \cite{serrano2010shifted} to dominoes and define the super shifted plactic monoid which is isomorphic to the direct product of two shifted plactic monoids, and prove that each class of the super shifted plactic monoid is represented by a shifted domino tableau. We also prove that the sum over all shifted domino tableaux of a fixed shape describe a product of two Q-Schur functions, and by taking a different kind of shifted domino tableaux we describe a product of two P-Schur functions.

\begin{figure}[h!]
\begin{center}
\begin{tikzpicture}[x=0.45cm,y=0.45cm]
\node  at (0,0) {Young tableaux};
\node  at (-10,-5) {Shifted Young tableaux};
\node  at (10,-5) {Domino tableaux};
\node  at (0,-10) {Shifted domino tableaux};
\draw[->](-0.5,-0.5) -- (-10,-4.5);
\draw[->](0.5,-0.5) -- (10,-4.5);
\draw[<-](-0.5,-9.5) -- (-10,-5.5);
\draw[<-](0.5,-9.5) -- (10,-5.5);
\node  at (6,-2.25)  [rotate=-23]{\small{extension}};
\node  at (-6,-7.75) [rotate=-23]{\small{extension}};
\node  at (-6,-2.25)  [rotate=23]{\small{Shifted analog}};
\node  at (6,-7.75) [rotate=23]{\small{Shifted analog}};
\end{tikzpicture}
\end{center}
\caption{}
\label{fg:intro}
\end{figure}

\section{Background}
\subsection{Partitions}
A {\em partition} $\lambda$ of an integer $n$ is a finite non-increasing sequence of positive integers $(\lambda_1,\lambda_2,\dots,\lambda_{\ell})$ where the sum, denoted by $|\lambda|$, is $n$. The value $\lambda_i$ is the $i$th {\em part} of $\lambda$ and the {\em length} of $\lambda$ is its number of parts. A partition $\lambda=(\lambda_1,\lambda_2,\dots,\lambda_{\ell})$ is said to be a {\em strict partition} if its parts are strictly decreasing.

The {\em Young diagram} of a partition $\lambda$ is a graphical representation of $\lambda$ as an array of square cells, in which the $i$th row contains $\lambda_i$ cells. By embedding naturally a diagram in $\mathbb{R}^2$, we define the {\em diagonals} $D_k$ of a diagram as the straight
lines of equation $y=x+k$, where $k$ is an integer, as shown in Figure~\ref{fg:1}. In a Young diagram, each cell is cut by a unique diagonal $D_k$. The diagonal $D_0$ separates the Young diagram into two parts. The part $\mathrm{up}(\lambda)$ contains the cells that are cut by $D_{k}$ for any positive integer $k$, while the remaining cells define the part $\mathrm{down}(\lambda)$. In Figure~\ref{fg:1}, we illustrate the Young diagram of $(4,4,4,2,1)$ cut by diagonals $D_k$, where the shaded part is $\mathrm{up}((4,4,4,2,1))$ and the white part is $\mathrm{down}((4,4,4,2,1))$.
\begin{figure}[ht!]
\begin{center}
\begin{tikzpicture}[domain=0:2,x=0.45cm,y=0.45cm]
\fill [color=gray!60](0,1)rectangle(1,5);
\fill [color=gray!60](1,4)rectangle(2,2);
\diagramms{5,4,3,3}
\draw[-](4,7) -- (5,8);
\draw[-](4,8) -- (5,9);
\draw[dotted](0,3) -- (4,7);
\draw[dotted](0,4) -- (4,8);
\draw[-](4,4) -- (5,5);
\draw[-](4,5) -- (5,6);
\draw[-](4,6) -- (5,7);
\draw[-](4.25,3.25) -- (5,4);
\draw[-](5,3) -- (6,4);
\draw[-](6,3) -- (7,4);
\draw[dotted](0,0) -- (4,4);
\draw[dotted](0,1) -- (4,5);
\draw[dotted](0,2) -- (4,6);
\draw[dotted](1,0) -- (4.25,3.25);
\draw[dotted](2,0) -- (5,3);
\draw[dotted](3,0) -- (6,3);
\node  at (8,-0.5) {\small{$x$}};
\node  at (-0.5,8) {\small{$y$}};
\node  at (5.25,4.25) {\tiny{$D_{-1}$}};
\node  at (6.5,4.5) {\tiny{$D_{-2}$}};
\node  at (7.75,3.75) {\tiny{$D_{-3}$}};
\node  at (5.5,5.25) {\tiny{$D_0$}};
\node  at (5.5,6.25) {\tiny{$D_1$}};
\node  at (5.5,7.25) {\tiny{$D_2$}};
\node  at (5.5,8.25) {\tiny{$D_3$}};
\node  at (5.5,9.25) {\tiny{$D_4$}};
\draw[->](0,0) -- (8,0);
\draw[->](0,0) -- (0,8);
\end{tikzpicture}
\end{center}
\caption{The Young diagram of the partition $(4,4,4,2,1)$ cut by the diagonals $D_k$.}
\label{fg:1}
\end{figure}
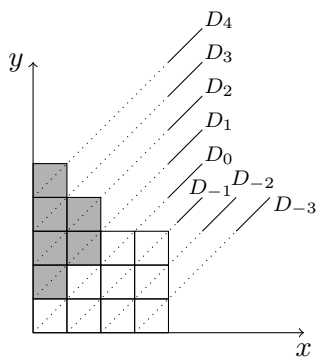

Given a strict partition $\lambda$, the {\em shifted Young diagram} of $\lambda$ is the Young diagram of $\lambda$ in which the $i$th row is shifted by $i-1$ units to the right with respect to the bottom row. For example, the shifted Young diagram of $(6,4,3,1)$ is: 
\begin{center}
\begin{tikzpicture}[domain=0:2,x=0.45cm,y=0.45cm]
\draw[-](6,0) -- (6,1);
\draw[-](0,0) -- (6,0);
\draw[-](0,1) -- (6,1);
\draw[-](1,2) -- (5,2);
\draw[-](2,3) -- (5,3);
\draw[-](3,4) -- (4,4);
\draw[-](0,0) -- (0,1);
\draw[-](1,0) -- (1,2);
\draw[-](2,0) -- (2,3);
\draw[-](3,0) -- (3,4);
\draw[-](4,0) -- (4,4);
\draw[-](5,0) -- (5,3);
\end{tikzpicture}\,.
\end{center}

Let $C$ be a collection of labelled square cells embedded in $\mathbb{R}^2$. Each cell of $C$ is cut by a unique diagonal $D_k$ (see Figure~\ref{fg:colec}). The {\em reading word} of $C$ is the sequence of integers obtained by reading its rows from left to right, starting from the top row and moving down. Its {\em diagonal reading} is the sequence of integers read from bottom to top on each diagonal $D_k$, ordered from the leftmost to the rightmost diagonal and separated by $"/"$. The {\em evaluation} of $C$ is the tuple $(a_1,a_2,\dots)$, where $a_i$ is the number of occurence of the letters $i$ in $C$. For example, the reading word of the collection of labeled square cells on Figure~\ref{fg:colec} is $12324316$, its diagonal reading is $1/32/32/1/6/4$ and its evaluation is $(2,2,2,1,0,1)$. 

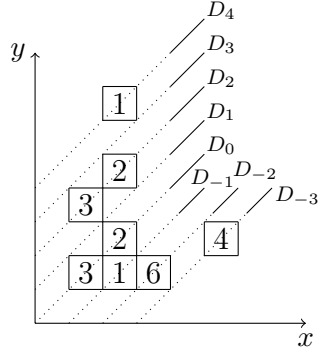
\begin{figure}[ht!]
\begin{center}
\begin{tikzpicture}[domain=0:2,x=0.45cm,y=0.45cm]
\draw[-](1,1) -- (4,1);
\draw[-](1,2) -- (4,2);
\draw[-](4,1) -- (4,2);
\draw[-](1,1) -- (1,2);
\draw[-](2,1) -- (2,5);
\draw[-](3,1) -- (3,3);
\draw[-](1,3) -- (3,3);
\draw[-](1,3) -- (1,4);
\draw[-](1,4) -- (2,4);
\draw[-](2,5) -- (3,5);
\draw[-](3,5) -- (3,4);
\draw[-](2,4) -- (3,4);
\draw[-](2,6) -- (3,6);
\draw[-](2,7) -- (3,7);
\draw[-](2,7) -- (2,6);
\draw[-](3,6) -- (3,7);
\draw[-](5,2) -- (5,3);
\draw[-](6,2) -- (6,3);
\draw[-](6,2) -- (5,2);
\draw[-](5,3) -- (6,3);
\draw[-](4,7) -- (5,8);
\draw[-](4,8) -- (5,9);
\draw[dotted](0,3) -- (4,7);
\draw[dotted](0,4) -- (4,8);
\draw[-](4,4) -- (5,5);
\draw[-](4,5) -- (5,6);
\draw[-](4,6) -- (5,7);
\draw[-](4.25,3.25) -- (5,4);
\draw[-](5.25,3.25) -- (6,4);
\draw[-](6.25,3.25) -- (7,4);
\draw[dotted](0,0) -- (4,4);
\draw[dotted](0,1) -- (4,5);
\draw[dotted](0,2) -- (4,6);
\draw[dotted](1,0) -- (4.25,3.25);
\draw[dotted](2,0) -- (5.25,3.25);
\draw[dotted](3,0) -- (6.25,3.25);
\node  at (8,-0.5) {\small{$x$}};
\node  at (-0.5,8) {\small{$y$}};
\node  at (5.25,4.25) {\tiny{$D_{-1}$}};
\node  at (6.5,4.5) {\tiny{$D_{-2}$}};
\node  at (7.75,3.75) {\tiny{$D_{-3}$}};
\node  at (5.5,5.25) {\tiny{$D_0$}};
\node  at (5.5,6.25) {\tiny{$D_1$}};
\node  at (5.5,7.25) {\tiny{$D_2$}};
\node  at (5.5,8.25) {\tiny{$D_3$}};
\node  at (5.5,9.25) {\tiny{$D_4$}};
\node  at (1.5,1.5) {{3}};
\node  at (2.5,1.5) {{1}};
\node  at (3.5,1.5) {{6}};
\node  at (2.5,2.5) {{2}};
\node  at (5.5,2.5) {{4}};
\node  at (2.5,4.5) {{2}};
\node  at (2.5,6.5) {{1}};
\node  at (1.5,3.5) {{3}};
\draw[->](0,0) -- (8,0);
\draw[->](0,0) -- (0,8);
\end{tikzpicture}
\end{center}
\caption{A collection of labeled square cells cut by the diagonals $D_k$.}
\label{fg:colec}
\end{figure}

We use the notations of \cite{macdonald1995symmetric} for symmetric functions. Monomial functions, Schur functions, P-Schur functions and Q-Schur functions are denoted respectively by $m_{\lambda}$, $s_{\lambda},P_{\lambda}$, and $Q_{\lambda}$.
\subsection{Young tableaux and shifted Young tableaux}
 
A {\em Young tableau} is a filling of a Young diagram with positive integers such that the entries are non-decreasing from left to right along the rows and are increasing from bottom to top along the columns (see Figure~\ref{fg:2}). The {\em shape} of a Young tableau $t$ denoted by $sh(t)$ is the non-increasing sequence of the lengths of its rows ({\em i.e.,} the partition associated with the Young diagram obtained by removing the labels of the Young tableau).
\begin{figure}[ht!]
\begin{center}
\begin{tikzpicture}[domain=0:2,x=0.9cm,y=0.9cm]
\draw[-](1,1) -- (1.5,1);
\draw[-](1.5,1) -- (1.5,0.5);
\draw[-](0,0) -- (2,0);
\draw[-](0,0.5) -- (2,0.5);
\draw[-](0,1) -- (1,1);
\draw[-](0,1.5) -- (0.5,1.5);
\draw[-](0,2) -- (0.5,2);
\draw[-](0,0) -- (0,2);
\draw[-](0.5,0) -- (0.5,2);
\draw[-](1,0) -- (1,1);
\draw[-](1.5,0) -- (1.5,0.5);
\draw[-](2,0) -- (2,0.5);
\node  at (0.75,0.25) {{1}};
\node  at (0.25,0.25) {{1}};
\node  at (1.25,0.25) {{1}};
\node  at (1.75,0.25) {{2}};
\node  at (0.25,0.75) {{2}};
\node  at (0.75,0.75) {{2}};
\node  at (0.25,1.25) {{4}};
\node  at (0.25,1.75) {{5}};
\node  at (1.25,0.75) {{3}};
\node  at (-0.5,1) {{$\vee$}};
\node  at (1,-0.5) {{$\leq$}};
\end{tikzpicture}
\end{center}
\caption{A Young tableau of shape $(4,3,1,1)$.}
\label{fg:2}
\end{figure}
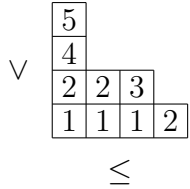

Let $A$ be a totally ordered infinite alphabet. The {\em plactic monoid}, denoted by $\mathrm{Pl}(A)$, is the quotient of the free monoid $A^*$ by the congruence $\equiv$ generated by the Knuth relations:
\begin{equation}
\begin{aligned}
acb \equiv cab\;\;\;\;\text{with}\;\;\;\;a \leq b < c,
\\
bac \equiv bca\;\;\;\;\text{with}\;\;\;\;a < b \leq c.
\end{aligned}
\end{equation} 
Each plactic class has a canonical representative which corresponds to the reading word of a Young tableau. Hence, each plactic class is represented by a unique Young tableau \cite{Lascouxschutzenberger1981}. For example, the Young tableau on Figure~\ref{fg:2} represents the plactic class of the word $542231112$.

Given a Young tableau $t$ with evaluation $(a_1,a_2,\dots)$, we denote the corresponding monomial by 
\begin{equation}
\begin{aligned}
x^t=x_{1}^{a_1}x_{2}^{a_2}\cdots.
\end{aligned}
\end{equation}
For each partition $\lambda$, the Schur function is defined as  
\begin{equation}
\begin{aligned}
s_{\lambda}=\sum_{\substack{\text{t, Young tableau} \\ sh(t)=\lambda}}x^t.
\end{aligned}
\end{equation}


A {\em Shifted Young Tableau} (ShYT) (see Figure~\ref{fg:3}) is a filling of a shifted Young diagram by letters of the totally ordered infinite alphabet $\{1'<1<2'<2<\cdots\}$ such that
\begin{equation}
\begin{aligned}
&\text{- rows and columns are non-decreasing from left to right and from bottom to top;} 
\\
&\text{- an element of } \{1,2,3,\dots\}\text{ appears at most once in each column;}
\\
&\text{- an element of } \{1',2',3',\dots\}\text{ appears at most once in each row.}
\end{aligned}
\label{c1}
\end{equation}
The {\em shape} of a ShYT $t$, denoted by $sh(t)$, is the partition associated with the shifted Young diagram obtained by removing the labels of $t$. 
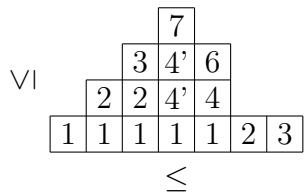
\begin{figure}[ht!]
\begin{center}
\begin{tikzpicture}[domain=0:2,x=0.48cm,y=0.48cm]
\draw[-](0,0) -- (7,0);
\draw[-](0,1) -- (7,1);
\draw[-](1,2) -- (5,2);
\draw[-](2,3) -- (5,3);
\draw[-](3,4) -- (4,4);
\draw[-](0,0) -- (0,1);
\draw[-](1,0) -- (1,2);
\draw[-](2,0) -- (2,3);
\draw[-](3,0) -- (3,4);
\draw[-](4,0) -- (4,4);
\draw[-](5,0) -- (5,3);
\draw[-](6,0) -- (6,1);
\draw[-](7,0) -- (7,1);
\node  at (0.5,0.5) {{1}};
\node  at (1.5,0.5) {{1}};
\node  at (2.5,0.5) {{1}};
\node  at (3.5,0.5) {{1}};
\node  at (4.5,0.5) {{1}};
\node  at (5.5,0.5) {{2}};
\node  at (6.5,0.5) {{3}};

\node  at (1.5,1.5) {{2}};
\node  at (2.5,1.5) {{2}};
\node  at (3.5,1.5) {{4'}};
\node  at (4.5,1.5) {{4}};

\node  at (2.5,2.5) {{3}};
\node  at (3.5,2.5) {{4'}};
\node  at (4.5,2.5) {{6}};

\node  at (3.5,3.5) {{7}};
\node  at (-0.75,2) [rotate=90]{{$\leq$}};
\node  at (3.5,-0.75) {{$\leq$}};
\end{tikzpicture}
\end{center}
\caption{A shifted Young tableau of shape $(7,4,3,1)$.}
\label{fg:3}
\end{figure}

Given a Young tableau $t$ of shape $\lambda$, we define $t\backslash \mathrm{up}(\lambda)$ as the shifted Young tableau obtained by removing the cells of the part $\mathrm{up}(\lambda)$ from $t$. Two Young tableaux $t_1$ and $t_2$ are {\em $\cong$-equivalent} if and only if $t_1$ and $t_2$ have same shape and $t_1 \backslash \mathrm{up}(\lambda)$ is equal to $t_2 \backslash \mathrm{up}(\lambda)$. For example, the following two Young tableaux $t_1$ and $t_2$ are $\cong$-equivalent:

\begin{center}
\begin{tikzpicture}[x=0.4cm,y=0.4cm]
\node  at (-1.25,1) {$t_1 =$};
\diagramms{3,3,3,1}
\node  at (0.5,0.5) {\tiny{1}};
\node  at (1.5,0.5) {\tiny{2}};
\node  at (2.5,0.5) {\tiny{2}};
\node  at (3.5,0.5) {\tiny{3}};

\node  at (0.5,1.5) {\tiny{4}};
\node  at (1.5,1.5) {\tiny{4}};
\node  at (2.5,1.5) {\tiny{7}};

\node  at (0.5,2.5) {\tiny{5}};
\node  at (1.5,2.5) {\tiny{6}};
\node  at (2.5,2.5) {\tiny{8}};
\end{tikzpicture}
\begin{tikzpicture}[x=0.4cm,y=0.4cm]
\node  at (0,1.5) {$\cong$};
\end{tikzpicture}
\begin{tikzpicture}[x=0.4cm,y=0.4cm]
\node  at (-1.25,1) {$t_2 =$};
\diagramms{3,3,3,1}
\node  at (0.5,0.5) {\tiny{1}};
\node  at (1.5,0.5) {\tiny{2}};
\node  at (2.5,0.5) {\tiny{2}};
\node  at (3.5,0.5) {\tiny{3}};

\node  at (0.5,1.5) {\tiny{3}};
\node  at (1.5,1.5) {\tiny{4}};
\node  at (2.5,1.5) {\tiny{7}};

\node  at (0.5,2.5) {\tiny{4}};
\node  at (1.5,2.5) {\tiny{5}};
\node  at (2.5,2.5) {\tiny{8}};
\end{tikzpicture},
\end{center}

\noindent since $t_1$ and $t_2$ have same shape $\lambda$ and

\begin{center}
\begin{tikzpicture}[x=0.4cm,y=0.4cm]
\node  at (-5.35,1) {$t_1 \backslash \mathrm{up}(\lambda) = t_2 \backslash \mathrm{up}(\lambda) =$};
\draw[-](0,0) -- (4,0);
\draw[-](0,1) -- (4,1);
\draw[-](1,2) -- (3,2);
\draw[-](2,3) -- (3,3);
\draw[-](0,0) -- (0,1);
\draw[-](1,0) -- (1,2);
\draw[-](2,0) -- (2,3);
\draw[-](3,0) -- (3,3);
\draw[-](4,1) -- (4,0);
\node  at (0.5,0.5) {\tiny{1}};
\node  at (1.5,0.5) {\tiny{2}};
\node  at (2.5,0.5) {\tiny{2}};
\node  at (3.5,0.5) {\tiny{3}};
\node  at (1.5,1.5) {\tiny{4}};
\node  at (2.5,1.5) {\tiny{7}};
\node  at (2.5,2.5) {\tiny{8}};
\end{tikzpicture}.
\end{center}

We will use this definition later in Section~3.

Given a totally ordered infinite alphabet $A$, the {\em shifted plactic monoid}, denoted by $\mathrm{ShPl}(A)$, is the quotient of $A^*$ by the congruence $\sim$ generated by the following relations:
\begin{equation}
\begin{aligned}
abdc \sim adbc\;\;\;\;\text{with}\;\;\;\;a\leq b \leq c < d,
\\
acdb \sim acbd \;\;\;\;\text{with}\;\;\;\;a\leq b < c \leq d,
\\
dacb \sim adcb \;\;\;\;\text{with}\;\;\;\;a\leq b < c < d,
\\
badc \sim bdac \;\;\;\;\text{with}\;\;\;\;a < b \leq c < d,
\\
cbda \sim cdba \;\;\;\;\text{with}\;\;\;\;a < b < c \leq d,
\\
dbca \sim bdca \;\;\;\;\text{with}\;\;\;\;a < b \leq c < d,
\\
bcda \sim bcad \;\;\;\;\text{with}\;\;\;\;a < b \leq c \leq d,
\\
cadb \sim cdab \;\;\;\;\text{with}\;\;\;\;a \leq b < c \leq d.
\end{aligned}
\end{equation}

In \cite{serrano2010shifted}, the author defines the mixed reading word of a certain shifted tableau as the canonical representative of the corresponding class of the shifted plactic monoid. Hence, each class of the shifted plactic monoid is represented by a unique ShYT.
 
Let $\lambda$ be a partition of length $\ell$ satisfying $\lambda_{\ell} \geq \ell$. We can also define the ShYT as the filling of the part $up(\lambda)$ by $X$ and the part $\mathrm{down}(\lambda)$ by letters in $\{1'<1<2'<2<\cdots\}$, so that conditions~\eqref{c1} are satisfied. For example, the following ShYT of shape $(7,5,5,4)$ is equivalent to the ShYT on Figure~\ref{fg:3}:
 
\begin{center}
\begin{tikzpicture}[domain=0:2,x=0.45cm,y=0.45cm]
\draw[-](0,0) -- (7,0);
\draw[-](0,1) -- (7,1);
\draw[-](0,2) -- (5,2);
\draw[-](0,3) -- (5,3);
\draw[-](0,4) -- (4,4);
\draw[-](0,0) -- (0,4);
\draw[-](1,0) -- (1,4);
\draw[-](2,0) -- (2,4);
\draw[-](3,0) -- (3,4);
\draw[-](4,0) -- (4,4);
\draw[-](5,0) -- (5,3);
\draw[-](6,0) -- (6,1);
\draw[-](7,0) -- (7,1);
\node  at (0.5,1.5) {\small{X}};
\node  at (0.5,2.5) {\small{X}};
\node  at (0.5,3.5) {\small{X}};
\node  at (1.5,2.5) {\small{X}};
\node  at (1.5,3.5) {\small{X}};
\node  at (2.5,3.5) {\small{X}};
\node  at (0.5,0.5) {{1}};
\node  at (1.5,0.5) {{1}};
\node  at (2.5,0.5) {{1}};
\node  at (3.5,0.5) {{1}};
\node  at (4.5,0.5) {{1}};
\node  at (5.5,0.5) {{2}};
\node  at (6.5,0.5) {{3}};
\node  at (1.5,1.5) {{2}};
\node  at (2.5,1.5) {{2}};
\node  at (3.5,1.5) {{4'}};
\node  at (4.5,1.5) {{4}};
\node  at (2.5,2.5) {{3}};
\node  at (3.5,2.5) {{4'}};
\node  at (4.5,2.5) {{6}};
\node  at (3.5,3.5) {{7}};
\end{tikzpicture}\,.
\end{center}

Given a shifted Young tableau $t$ of evaluation $(a_1,a_2,\dots)$, we denote the corresponding monomial by 
\begin{equation}
\begin{aligned}
x^t=x_{1}^{a_1}x_{2}^{a_2}\dots.
\end{aligned}
\end{equation}
For each partition $\lambda$, the Q-Schur function is defined as  
\begin{equation}
\begin{aligned}
Q_{\lambda}=\sum_{\substack{\text{t, shifted Young tableau} \\ sh(t)=\lambda}}x^t.  
\end{aligned}
\end{equation}

The P-Schur function is defined as the Q-Schur function with a different kind of shifted Young tableaux, namely those in which the letters on $D_0$ are not allowed to be in $\{1',2',3',\dots\}$.

\subsection{2-core and 2-quotient of a partition}
The {\em 2-core} of a partition $\lambda$ is the partition obtained from the Young diagram of $\lambda$ by removing {\em dominoes} ({\em i.e.}, rectangles $1\times 2$ or $2\times 1$) having no cells above and on their right side, until no such dominoes exist. The remaining diagram is the Young diagram of the 2-core of $\lambda$. As shown in \cite{macdonald1995symmetric}, this process does not depend on the order in which the dominoes are removed. For example, let us illustrate the process with $(5,4,3,1,1)$
 
\begin{center}
\begin{tabular}{l}
\begin{tikzpicture}[domain=0:2,x=0.4cm,y=0.4cm]
\fill [color=gray!60](0,3)rectangle(1,5);
\draw[-](0,0) -- (5,0);
\draw[-](0,1) -- (5,1);
\draw[-](0,2) -- (4,2);
\draw[-](0,3) -- (3,3);
\draw[-](0,0) -- (0,5);
\draw[-](0,5) -- (1,5);
\draw[-](0,4) -- (1,4);
\draw[-](1,0) -- (1,5);
\draw[-](2,0) -- (2,3);
\draw[-](3,0) -- (3,3);
\draw[-](4,0) -- (4,2);
\draw[-](5,0) -- (5,1);
\end{tikzpicture}
\end{tabular}
$\;\rightarrow\;$
\begin{tabular}{l}
\begin{tikzpicture}[domain=0:2,x=0.4cm,y=0.4cm]
\fill [color=gray!60](1,2)rectangle(3,3);
\draw[-](0,0) -- (5,0);
\draw[-](0,1) -- (5,1);
\draw[-](0,2) -- (4,2);
\draw[-](0,3) -- (3,3);
\draw[-](0,0) -- (0,3);
\draw[-](1,0) -- (1,3);
\draw[-](2,0) -- (2,3);
\draw[-](3,0) -- (3,3);
\draw[-](4,0) -- (4,2);
\draw[-](5,0) -- (5,1);
\end{tikzpicture}
\end{tabular}
$\;\rightarrow\;$
\begin{tabular}{l}
\begin{tikzpicture}[domain=0:2,x=0.4cm,y=0.4cm]
\fill [color=gray!60](2,1)rectangle(4,2);
\draw[-](0,0) -- (5,0);
\draw[-](0,1) -- (5,1);
\draw[-](0,2) -- (4,2);
\draw[-](0,3) -- (1,3);
\draw[-](0,0) -- (0,3);
\draw[-](1,0) -- (1,3);
\draw[-](2,0) -- (2,2);
\draw[-](3,0) -- (3,2);
\draw[-](4,0) -- (4,2);
\draw[-](5,0) -- (5,1);
\end{tikzpicture}
\end{tabular}
$\;\rightarrow\;$
\begin{tabular}{l}
\begin{tikzpicture}[domain=0:2,x=0.4cm,y=0.4cm]
\fill [color=gray!60](3,0)rectangle(5,1);
\draw[-](0,0) -- (5,0);
\draw[-](0,1) -- (5,1);
\draw[-](0,2) -- (2,2);
\draw[-](0,3) -- (1,3);
\draw[-](0,0) -- (0,3);
\draw[-](1,0) -- (1,3);
\draw[-](2,0) -- (2,2);
\draw[-](3,0) -- (3,1);
\draw[-](4,0) -- (4,1);
\draw[-](5,0) -- (5,1);
\end{tikzpicture}
\end{tabular}
$\;\rightarrow\;$
\begin{tabular}{l}
\begin{tikzpicture}[domain=0:2,x=0.4cm,y=0.4cm]
\draw[-](0,0) -- (3,0);
\draw[-](0,1) -- (3,1);
\draw[-](0,2) -- (2,2);
\draw[-](0,3) -- (1,3);
\draw[-](0,0) -- (0,3);
\draw[-](1,0) -- (1,3);
\draw[-](2,0) -- (2,2);
\draw[-](3,0) -- (3,1);
\draw[-](3,0) -- (3,1);
\draw[-](3,0) -- (3,1);
\end{tikzpicture}
,
\end{tabular}
\end{center}
so that, the {\em 2-core} of $(5,4,3,1,1)$ is $(3,2,1)$. 

Given a partition $\lambda$ of length $\ell$, the {\em 2-quotient} of $\lambda$ is a pair of partitions $(\mu,\nu)$ obtained by the following process:

\begin{itemize}
\item define $L$ as the $k$-tuple whose $i$th element is $\lambda_i +\ell -i$;
\item define $M$ as the $k$-tuple obtained from $L$ by replacing the even numbers from right to left successively by $0,2,4,\dots$, and the odd numbers by $1,3,5,\dots$;
\item subtract from the even parts of $L$ the corresponding parts from $M$ and divide by 2 to obtain $\mu$. The partition $\nu$ is obtained by the same procedure while considering odd parts.
\end{itemize}
 
Let us illustrate the process on the partition $\lambda=(14,10,8,4,4,2,1,1)$:
\begin{itemize}
\item the $8$-tuple $L$ is $(21,16,13,8,7,4,2,1)$;
\item the $8$-tuple $M$ is $(7,6,5,4,3,2,0,1)$;
\item the two partitions $\mu$ and $\nu$ are: 
\begin{itemize}
\item $\mu=\frac{1}{2}((16,8,4,2)-(6,4,2,0))=(5,2,1,1)$,
\item $\nu=\frac{1}{2}((21,13,7,1)-(7,5,3,1))=(7,4,2)$.
\end{itemize}
\end{itemize}
 
Hence, the 2-quotient of $(14,10,8,4,4,2,1,1)$ is $((5,2,1,1),(7,4,2))$.
 
\subsection{Domino tableaux}
 
By definition, if the 2-core of a partition $\lambda$ is the empty partition, then we can pave $\lambda$ with dominoes. Such partition is called {\em pavable}.
 
Given a pavable partition $\lambda$, a {\em domino tableau} of shape $\lambda$ is a paving of $\lambda$ by dominoes, where the dominoes are filled with positive integers, such that the entries are non-decreasing along rows from left to right and are increasing along columns from bottom to top.
 
As for Young tableaux, each domino of a domino tableau is cut by a unique diagonal $D_{2k}$. The diagonal $D_0$ separates a domino tableau of shape $\lambda$ into two parts. The part $\mathrm{up'}(\lambda)$ contains the dominoes cut by $D_{2k}$, for a positive integer $k$, while the remaining dominoes form the part $\mathrm{down'}(\lambda)$. In Figure~\ref{fg:4}, we show a domino tableau of shape $(8,5,5,4,4)$ cut by the diagonals $D_{2k}$, where the shaded part is $\mathrm{up'}((8,5,5,4,4))$ and the white part is $\mathrm{down'}((8,5,5,4,4))$.

The {\em diagonal reading} of a domino tableau is the sequence of integers read from bottom to top on each diagonal $D_{2k}$, ordered from the leftmost to the rightmost diagonal and separated by $"/"$. The {\em column reading} of a domino tableau $T$ is the sequence of integers obtained by reading the successive columns of $T$ from top to bottom and from left to right. Horizontal dominoes, which belong to
two successive columns $i$ and $i+1$, are read only once, when reading column $i$. For example, the diagonal reading of the domino tableau in Figure~\ref{fg:4} is $5/336/1144/235/2/3$ and its column reading is $5313164324523$.

\begin{figure}[ht!]
\begin{center}
\begin{tikzpicture}[domain=0:2,x=0.45cm,y=0.45cm]
\fill [color=gray!60](0,2)rectangle(2,5);
\fill [color=gray!60](2,5)rectangle(4,4);
\draw[->](0,0) -- (10,0);
\draw[->](0,0) -- (0,8);
\draw[-](0,0) -- (8,0);
\draw[-](0,0) -- (0,5);
\draw[-](2,1) -- (8,1);
\draw[-](0,2) -- (4,2);
\draw[-](0,4) -- (4,4);
\draw[-](0,5) -- (4,5);
\draw[-](0,5) -- (1,5);
\draw[-](0,5) -- (1,5);
\draw[-](1,5) -- (3,5);
\draw[-](8,0) -- (8,1);
\draw[-](6,0) -- (6,1);
\draw[-](4,0) -- (4,2);
\draw[-](1,4) -- (1,0);
\draw[-](2,5) -- (2,0);
\draw[-](4,5) -- (4,2);
\draw[-](3,4) -- (3,2);
\draw[-](1,5) -- (1,5);
\draw[-](3,5) -- (3,5);
\draw[-](5,1) -- (5,3);
\draw[-](5,3) -- (4,3);
\draw[-](8,2) -- (12,6);
\draw[-](6,2) -- (10,6);
\draw[-](6,4) -- (10,8);
\draw[-](5,5) -- (8,8);
\draw[-](4,6) -- (6,8);
\draw[-](3,7) -- (4,8);
\node  at (8.5,8.5) {$D_0$};
\draw[-](5,5) -- (8,8);
\draw[dotted](0,0) -- (5,5);
\draw[dotted](0,2) -- (4 ,6);
\draw[dotted](0,4) -- (4 ,8);
\draw[dotted](2,0) -- (6 ,4);
\draw[dotted](4,0) -- (8 ,4);
\draw[dotted](6,0) -- (10 ,4);
\node  at (10,-0.5) {\small{$x$}};
\node  at (-0.5,8) {\small{$y$}};
\node  at (6.5,8.5) {{$D_2$}};
\node  at (4.5,8.5) {{$D_4$}};
\node  at (10.5,8.5) {$D_{-2}$};
\node  at (10.5,6.5) {$D_{-4}$};
\node  at (12.5,6.5) {$D_{-6}$};
\node  at (0.5,1) {\tiny{1}};
\node  at (1.5,1) {\tiny{1}};
\node  at (5,0.5) {\tiny{2}};
\node  at (3,0.5) {\tiny{2}};
\node  at (7,0.5) {\tiny{3}};
\node  at (0.5,3) {\tiny{3}};
\node  at (1,4.5) {\tiny{5}};
\node  at (3,1.5) {\tiny{3}};
\node  at (1.5,3) {\tiny{3}};
\node  at (2.5,3) {\tiny{4}};
\node  at (3.5,3) {\tiny{4}};
\node  at (4.5,2) {\tiny{5}};
\node  at (3,4.5) {\tiny{6}};
\node  at (-0.65,3) {{$\vee$}};
\node  at (4,-0.65) {{$\leq$}};
\end{tikzpicture}
\end{center}
\caption{A domino tableau of shape $(8,5,5,4,4)$.}
\label{fg:4}
\end{figure}

With respect to how a domino is cut by $D_{2k}$, we distinguish two kinds of dominoes, dominoes of type~1 and dominoes of type~2 as shown in Figure~\ref{fg:5}.
\begin{figure}[ht!]
\begin{center}
\begin{tikzpicture}[domain=0:2,x=0.48cm,y=0.48cm]
\draw[-](0,0) -- (2,0);
\draw[-](0,1) -- (2,1);
\draw[-](0,1) -- (0,0);
\draw[-](2,1) -- (2,0);
\draw[-](0,-1) -- (3,2);
\draw[-](6,0) -- (6,2);
\draw[-](7,0) -- (7,2);
\draw[-](7,0) -- (6,0);
\draw[-](7,2) -- (6,2);
\draw[-](5,-1) -- (8,2);
\node  at (3.75,-2.75) {type~1};
\draw [color=black,decorate,decoration={brace}](8,-1.5) -- (-1,-1.5) ;
\end{tikzpicture}
$\;\;\;\;\;\;$
\begin{tikzpicture}[domain=0:2,x=0.48cm,y=0.48cm]
\draw[-](0,0) -- (2,0);
\draw[-](0,1) -- (2,1);
\draw[-](0,1) -- (0,0);
\draw[-](2,1) -- (2,0);
\draw[-](-1,-1) -- (2,2);
\draw[-](6,0) -- (6,2);
\draw[-](7,0) -- (7,2);
\draw[-](7,0) -- (6,0);
\draw[-](7,2) -- (6,2);
\draw[-](5,0) -- (8,3);
\node  at (3.75,-2.75) {type~2};
\draw [color=black,decorate,decoration={brace}](8,-1.5) -- (-1,-1.5) ;
\end{tikzpicture}
\end{center}
\caption{Dominoes of type~1 and dominoes of type~2.}
\label{fg:5}
\end{figure}

Given two tableaux $t_1$ and $t_2$ of respective shapes $\mu$ and $\nu$, we say in the sequel that the pair $(t_1,t_2)$ has the shape $(\mu,\nu)$. 
\begin{thm}
Given a pavable partition $\lambda$ of 2-quotient $(\mu,\nu)$, the set of domino tableaux of shape $\lambda$ and the set of pairs $(t_1,t_2)$ of Young tableaux of shape $(\mu,\nu)$ are in bijection.
\label{thm1}
\end{thm}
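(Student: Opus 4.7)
The plan is to follow the strategy used by Carr\'e--Leclerc~\cite{carre1995splitting} for ordinary domino tableaux: construct an explicit map from domino tableaux to pairs $(t_1,t_2)$ by exploiting the diagonal structure, then verify it is a bijection. Concretely, given a domino tableau $T$ of shape $\lambda$, I would first classify each domino as type~1 or type~2 following Figure~\ref{fg:5}. Then on each diagonal $D_{2k}$ of $T$, I read the labels of the type~1 dominoes from bottom to top and place them on the diagonal $D_k$ of a new array $t_1$ (so each domino contracts to a single cell), and proceed analogously with the type~2 dominoes to form $t_2$. The rest of the proof consists in showing that the resulting array $t_1$ has shape $\mu$ and is a Young tableau, that $t_2$ has shape $\nu$ and is a Young tableau, and that the construction is reversible.

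I would establish the shape identification by induction on $|\lambda|$. Removing a removable domino from $T$ (one with no cell above and no cell to its right) amounts to removing a corner cell from either $t_1$ or $t_2$, depending on the type of the removed domino. The key subclaim is that the same operation on the 2-quotient side removes a corner from the corresponding partition; this can be checked by tracking how the tuples $L$ and $M$ appearing in the definition of the 2-quotient evolve under the removal of a domino of either type. Since the base case (empty $\lambda$) is trivial, the inductive step gives both shape assertions simultaneously.

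Verifying the Young tableau conditions on $t_1$ and $t_2$ reduces to a short case analysis on locally adjacent dominoes. Two dominoes of the same type whose contracted cells end up horizontally (resp.\ vertically) adjacent in $t_i$ must share a side in $T$, so the row-nondecreasing and column-increasing conditions on $T$ translate directly into the required inequalities on $t_1$ and $t_2$. For the inverse map, I would insert dominoes into an initially empty shape in increasing order of label, reading from $t_1$ and $t_2$ simultaneously: each new label is placed as a domino of the prescribed type on the prescribed diagonal, and the reversed inductive argument of the previous paragraph guarantees that the placement always produces a valid partial subshape of $\lambda$, ultimately filling $\lambda$ completely.

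The main obstacle is the shape identification step. Intuitively it is clear that the diagonal arrangement of type~1 and type~2 dominoes records exactly the data of the 2-quotient, but turning this into a rigorous inductive argument requires careful bookkeeping of how the beta-numbers $L_i = \lambda_i + \ell - i$ and the normalising tuple $M$ change when a domino is removed. I expect to handle this by routing through removable dominoes rather than through a direct abacus computation, as the inductive form is cleaner and parallels the approach of Carr\'e--Leclerc.
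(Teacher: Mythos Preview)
Your proposal is correct and follows exactly the approach the paper describes: the paper does not give its own proof of this theorem but cites Carr\'e--Leclerc~\cite{carre1995splitting} and recalls their algorithm~$\Gamma$, which is precisely the diagonal-reading construction (type~1 dominoes to $t_1$, type~2 to $t_2$) that you outline, together with the inverse built by adding dominoes label by label. Your inductive verification of the shape identification and the local case analysis for the tableau conditions are the natural way to flesh out what the paper leaves to the reference.
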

 
In \cite{carre1995splitting}, the authors prove Theorem~\ref{thm1} using an algorithm denoted by {\em $\Gamma$} in the following, that sends a domino tableau $T$ of shape $\lambda$ to a pair of Young tableaux ($t_1,t_2$) of shape ($\mu,\nu$), the 2-quotient of $\lambda$. Let us recall that $\Gamma$ consists in deleting on each diagonal of $T$ all labels on dominoes of type~1 (resp. type~2), the diagonal reading of the remaining labels being then the diagonal reading of a Young tableau $t_1$ (resp. $t_2$). For example, let us illustrate algorithm $\Gamma$ applied to the following domino tableau $T$:  
\begin{center}
\begin{tikzpicture}[domain=0:2,x=0.39cm,y=0.39cm]
\draw[-](-12,4) -- (-12,5);
\draw[-](-12,5) -- (-8,5);
\draw[-](-10,5) -- (-10,4);
\draw[-](-8,5) -- (-8,3);
\draw[-](-8,3) -- (-8,5);
\draw[-](-8,5) -- (-7,5);
\draw[-](-7,5) -- (-7,3);
\draw[-](-12,0) -- (-4,0);
\draw[-](-12,0) -- (-12,4);
\draw[-](-9,2) -- (-9,0);
\draw[-](-8,1) -- (-4,1);
\draw[-](-12,2) -- (-8,2);
\draw[-](-12,4) -- (-8,4);
\draw[-](-12,4) -- (-11,4);
\draw[-](-11,4) -- (-9,4);
\draw[-](-4,0) -- (-4,1);
\draw[-](-6,0) -- (-6,1);
\draw[-](-8,0) -- (-8,2);
\draw[-](-11,2) -- (-11,0);
\draw[-](-10,4) -- (-10,0);
\draw[-](-8,4) -- (-8,2);
\draw[-](-9,4) -- (-9,4);
\draw[-](-7,1) -- (-7,3);
\draw[-](-7,3) -- (-8,3);
\draw[-](-12,3) -- (-8,3);
\node  at (-13.5,2.5) {\small{$T=$}};
\draw[dotted](-12,0) -- (-7,5);
\draw[dotted](-12,2) -- (-8 ,6);
\draw[dotted](-12,4) -- (-10 ,6);
\draw[dotted](-10,0) -- (-6 ,4);
\draw[dotted](-8,0) -- (-6 ,2);
\draw[dotted](-6,0) -- (-4 ,2);
\node  at (-10.5,1) {\tiny{1}};
\node  at (-11.5,1) {\tiny{1}};
\node  at (-9.5,1) {\tiny{2}};
\node  at (-8.5,1) {\tiny{2}};
\node  at (-7,0.5) {\tiny{3}};
\node  at (-5,0.5) {\tiny{4}};
\node  at (-11,3.5) {\tiny{5}};
\node  at (-11,2.5) {\tiny{3}};
\node  at (-11,4.5) {\tiny{7}};
\node  at (-9,4.5) {\tiny{7}};
\node  at (-9,2.5) {\tiny{3}};
\node  at (-9,3.5) {\tiny{5}};
\node  at (-7.5,2) {\tiny{6}};
\node  at (-7.5,4) {\tiny{7}};
\draw[->](-3,3.5) -- (-1,5);
\draw[->](-3,2.5) -- (-1,1);
\draw[->](9.5,8) -- (11,8);
\draw[->](9.5,-1) -- (11,-1);
\draw[dotted](0,-4) -- (5,1);
\draw[dotted](0,-2) -- (4 ,2);
\draw[dotted](0,0) -- (2 ,2);
\draw[dotted](2,-4) -- (6 ,0);
\draw[dotted](4,-4) -- (6 ,-2);
\draw[dotted](6,-4) -- (8 ,-2);
\node  at (-0.15,6) {\small{1}};
\node  at (3,7) {\small{2}};
\node  at (1,9) {\small{5}};
\node  at (3,9) {\small{5}};
\node  at (2,-4) {\small{2}};
\node  at (4,-4) {\small{3}};
\node  at (6,-4) {\small{4}};
\node  at (4,-2) {\small{6}};
\node  at (1,-3) {\small{1}};
\node  at (4,0) {\small{7}};
\node  at (3,-1) {\small{3}};
\node  at (0,-2) {\small{3}};
\node  at (3,1) {\small{7}};
\node  at (0.5,0.5) {\small{7}};
\draw[dotted](0,6) -- (5,11);
\draw[dotted](0,8) -- (4 ,12);
\draw[dotted](0,10) -- (2 ,12);
\draw[dotted](2,6) -- (6 ,10);
\draw[dotted](4,6) -- (6 ,8);
\draw[dotted](6,6) -- (8 ,8);
\draw[-](13,7) -- (13 ,9);
\draw[-](13,7) -- (15 ,7);
\draw[-](15,9) -- (15 ,7);
\draw[-](15,9) -- (13 ,9);
\draw[-](15,8) -- (13 ,8);
\draw[-](14,7) -- (14 ,9);
\node  at (13.5,8.5) {\tiny{5}};
\node  at (13.5,7.5) {\tiny{1}};
\node  at (14.5,8.5) {\tiny{5}};
\node  at (14.5,7.5) {\tiny{2}};
\node  at (17,8) {\small{$=t_1$}};
\draw[-](13,0.5) -- (13,-2.5);
\draw[-](13,-2.5) -- (17,-2.5);
\draw[-](13,-1.5) -- (17,-1.5);
\draw[-](13,-0.5) -- (16,-0.5);
\draw[-](13,0.5) -- (16,0.5);
\draw[-](14,0.5) -- (14,-2.5);
\draw[-](15,0.5) -- (15,-2.5);
\draw[-](16,0.5) -- (16,-2.5);
\draw[-](17,-2.5) -- (17,-1.5);
\node  at (13.5,-2) {\tiny{1}};
\node  at (14.5,-2) {\tiny{2}};
\node  at (15.5,-2) {\tiny{3}};
\node  at (16.5,-2) {\tiny{4}};
\node  at (13.5,-1) {\tiny{3}};
\node  at (14.5,-1) {\tiny{3}};
\node  at (15.5,-1) {\tiny{6}};
\node  at (13.5,0) {\tiny{7}};
\node  at (14.5,0) {\tiny{7}};
\node  at (15.5,0) {\tiny{7}};
\node  at (18,-1) {\small{$=t_2$}};
\node  at (18.5,-2.5) {.};
\end{tikzpicture}
\end{center}

We obtain a pair of Young tableaux $(t_1,t_2)$ of shape $((2,2),(4,3,3))$. The reverse algorithm of $\Gamma$ associates with a pair of Young tableaux $(t_1,t_2)$ of shape ($\mu,\nu$) a domino tableau $T$ of shape $\lambda$. This algorithm starts with the pair $(t_1^{(0)},t_2^{(0)})$ of Young tableaux of shape $(\mu^{(0)},\nu^{(0)})$, which corresponds to the domino tableau $T^{(0)}$ of shape $\lambda^{(0)}$, where $\mu^{(0)}$, $\nu^{(0)}$ and $\lambda^{(0)}$ are empty partitions. Let us explain the $i^{th}$-step of this algorithm: let $(t_1^{(i_1)},t_2^{(i_2)})$ be a pair of Young tableaux of shape $(\mu^{(i_1)},\nu^{(i_2)})$, which corresponds to the domino tableau $T^{(i)}$ of shape $\lambda^{(i)}$, where $i$, $i_1$ and $i_2$ are integers. Let $u$ be the smallest integer in ($t_1,t_2$) that has not yet been selected. The algorithm consists in gluing all the cells of ($t_1,t_2$) containing the integer $u$ to $(t_1^{(i_1)},t_2^{(i_2)})$, by keeping the same placement of cells. This operation gives a new pair $(t_1^{(i_1+1)},t_2^{(i_2+1)})$ of Young tableaux of shape ($\mu^{(i_1+1)},\nu^{(i_2+1)}$). To build the domino tableau $T^{(i+1)}$ of shape $\lambda^{(i+1)}$, for all cells on $D_k$ containing $u$ in $t_1^{(i_1+1)}$ (resp. $t_2^{(i_2+1)}$) we glue a domino of type~1 (resp. type~2) to the domino tableau $T^{(i)}$ on the corresponding diagonal $D_{2k}$, such that $-max(\mu_1^{(i_1+1)},\nu_1^{(i_2+1)}) \leq k\leq max(\ell_{\mu^{(i_1+1)}},\ell_{\nu^{(i_2+1)}})$, where $\ell_{\mu^{(i_1+1)}}$ and $\ell_{\nu^{(i_2+1)}}$ are respectively the lengths of $\mu^{(i_1+1)}$ and $\nu^{(i_2+1)}$. For example the reverse algorithm applied to $(t_1,t_2)$ gives:

\begin{center}
\begin{tabular}{l}
$\left(
\begin{tabular}{l}
\begin{tikzpicture}[x=0.38cm,y=0.38cm]
\diagramms{1}
\node  at (0.5,0.5) {\tiny{1}};
\end{tikzpicture}
$,$
\begin{tikzpicture}[x=0.38cm,y=0.38cm]
\diagramms{1}
\node  at (0.5,0.5) {\tiny{1}};
\end{tikzpicture}
\end{tabular}
\right)
$
$\;\rightarrow\;$
\begin{tabular}{l}
\begin{tikzpicture}[x=0.38cm,y=0.38cm]
\draw[-](0,0) -- (2,0);
\draw[-](2,2) -- (2,0);
\draw[-](0,2) -- (2,2);
\draw[-](0,2) -- (0,0);
\draw[-](1,0) -- (1,2);
\node  at (0.5,1) {\tiny{1}};
\node  at (1.5,1) {\tiny{1}};
\end{tikzpicture}
\end{tabular}
,
$\left(
\begin{tabular}{l}
\begin{tikzpicture}[x=0.38cm,y=0.38cm]
\diagramms{1,1}
\node  at (0.5,0.5) {\tiny{1}};
\node  at (1.5,0.5) {\tiny{2}};
\end{tikzpicture}
$,$
\begin{tikzpicture}[x=0.38cm,y=0.38cm]
\diagramms{1,1}
\node  at (0.5,0.5) {\tiny{1}};
\node  at (1.5,0.5) {\tiny{2}};
\end{tikzpicture}
\end{tabular}
\right)$
$\;\rightarrow\;$
\begin{tabular}{l}
\begin{tikzpicture}[x=0.38cm,y=0.38cm]
\draw[-](0,0) -- (4,0);
\draw[-](4,2) -- (4,0);
\draw[-](0,2) -- (4,2);
\draw[-](0,2) -- (0,0);
\draw[-](2,0) -- (2,2);
\draw[-](1,0) -- (1,2);
\draw[-](3,0) -- (3,2);
\node  at (0.5,1) {\tiny{1}};
\node  at (1.5,1) {\tiny{1}};
\node  at (2.5,1) {\tiny{2}};
\node  at (3.5,1) {\tiny{2}};
\end{tikzpicture}
\end{tabular}
\end{tabular}
,
\end{center}
\begin{center}
\begin{tabular}{l}
$\left(
\begin{tabular}{l}
\begin{tikzpicture}[x=0.38cm,y=0.38cm]
\diagramms{1,1}
\node  at (0.5,0.5) {\tiny{1}};
\node  at (1.5,0.5) {\tiny{2}};
\end{tikzpicture}
$,$
\begin{tikzpicture}[x=0.38cm,y=0.38cm]
\diagramms{2,2,1}
\node  at (0.5,0.5) {\tiny{1}};
\node  at (1.5,0.5) {\tiny{2}};
\node  at (2.5,0.5) {\tiny{3}};
\node  at (1.5,1.5) {\tiny{3}};
\node  at (0.5,1.5) {\tiny{3}};
\end{tikzpicture}
\end{tabular}
\right)$
$\;\rightarrow\;$
\begin{tabular}{l}
\begin{tikzpicture}[x=0.38cm,y=0.38cm]
\draw[-](0,0) -- (6,0);
\draw[-](4,3) -- (4,0);
\draw[-](0,2) -- (4,2);
\draw[-](0,3) -- (0,0);
\draw[-](2,0) -- (2,3);
\draw[-](1,0) -- (1,2);
\draw[-](4,1) -- (6,1);
\draw[-](0,3) -- (4,3);
\draw[-](6,0) -- (6,1);
\draw[-](3,0) -- (3,2);
\node  at (0.5,1) {\tiny{1}};
\node  at (1.5,1) {\tiny{1}};
\node  at (2.5,1) {\tiny{2}};
\node  at (3.5,1) {\tiny{2}};
\node  at (5,0.5) {\tiny{3}};
\node  at (3,2.5) {\tiny{3}};
\node  at (1,2.5) {\tiny{3}};
\end{tikzpicture}
\end{tabular}
,
$\left(
\begin{tabular}{l}
\begin{tikzpicture}[x=0.38cm,y=0.38cm]
\diagramms{1,1}
\node  at (0.5,0.5) {\tiny{1}};
\node  at (1.5,0.5) {\tiny{2}};
\end{tikzpicture}
,
\begin{tikzpicture}[x=0.38cm,y=0.38cm]
\diagramms{2,2,1,1}
\node  at (0.5,0.5) {\tiny{1}};
\node  at (1.5,0.5) {\tiny{2}};
\node  at (2.5,0.5) {\tiny{3}};
\node  at (1.5,1.5) {\tiny{3}};
\node  at (0.5,1.5) {\tiny{3}};
\node  at (3.5,0.5) {\tiny{4}};
\end{tikzpicture}
\end{tabular}
\right)$
$\;\rightarrow\;$
\begin{tabular}{l}
\begin{tikzpicture}[x=0.37cm,y=0.37cm]
\draw[-](0,0) -- (8,0);
\draw[-](4,3) -- (4,0);
\draw[-](0,2) -- (4,2);
\draw[-](0,3) -- (0,0);
\draw[-](2,0) -- (2,3);
\draw[-](1,0) -- (1,2);
\draw[-](4,1) -- (8,1);
\draw[-](0,3) -- (4,3);
\draw[-](6,0) -- (6,1);
\draw[-](8,0) -- (8,1);
\draw[-](3,0) -- (3,2);
\node  at (0.5,1) {\tiny{1}};
\node  at (1.5,1) {\tiny{1}};
\node  at (2.5,1) {\tiny{2}};
\node  at (3.5,1) {\tiny{2}};
\node  at (5,0.5) {\tiny{3}};
\node  at (3,2.5) {\tiny{3}};
\node  at (1,2.5) {\tiny{3}};
\node  at (7,0.5) {\tiny{4}};
\end{tikzpicture}
\end{tabular}
\end{tabular}
\end{center}
\begin{center}
\begin{tabular}{l}
$\left(
\begin{tabular}{l}
\begin{tikzpicture}[x=0.38cm,y=0.38cm]
\diagramms{2,2}
\node  at (0.5,0.5) {\tiny{1}};
\node  at (1.5,0.5) {\tiny{2}};
\node  at (0.5,1.5) {\tiny{5}};
\node  at (1.5,1.5) {\tiny{5}};
\end{tikzpicture}
,
\begin{tikzpicture}[x=0.38cm,y=0.38cm]
\diagramms{2,2,1,1}
\node  at (0.5,0.5) {\tiny{1}};
\node  at (1.5,0.5) {\tiny{2}};
\node  at (2.5,0.5) {\tiny{3}};
\node  at (1.5,1.5) {\tiny{3}};
\node  at (0.5,1.5) {\tiny{3}};
\node  at (3.5,0.5) {\tiny{4}};
\end{tikzpicture}
\end{tabular}
\right)$
$\;\rightarrow\;$
\begin{tabular}{l}
\begin{tikzpicture}[x=0.37cm,y=0.37cm]
\draw[-](0,0) -- (8,0);
\draw[-](4,4) -- (4,0);
\draw[-](4,3) -- (0,3);
\draw[-](0,2) -- (4,2);
\draw[-](3,0) -- (3,2);
\draw[-](0,4) -- (0,0);
\draw[-](2,0) -- (2,4);
\draw[-](1,0) -- (1,2);
\draw[-](4,1) -- (8,1);
\draw[-](0,4) -- (4,4);
\draw[-](6,0) -- (6,1);
\draw[-](8,0) -- (8,1);
\node  at (0.5,1) {\tiny{1}};
\node  at (1.5,1) {\tiny{1}};
\node  at (2.5,1) {\tiny{2}};
\node  at (3.5,1) {\tiny{2}};
\node  at (5,0.5) {\tiny{3}};
\node  at (3,2.5) {\tiny{3}};
\node  at (1,2.5) {\tiny{3}};
\node  at (7,0.5) {\tiny{4}};
\node  at (3,3.5) {\tiny{5}};
\node  at (1,3.5) {\tiny{5}};
\end{tikzpicture}
\end{tabular}
,
$\left(
\begin{tabular}{l}
\begin{tikzpicture}[x=0.38cm,y=0.38cm]
\diagramms{2,2}
\node  at (0.5,0.5) {\tiny{1}};
\node  at (1.5,0.5) {\tiny{2}};
\node  at (0.5,1.5) {\tiny{5}};
\node  at (1.5,1.5) {\tiny{5}};
\end{tikzpicture}
,
\begin{tikzpicture}[x=0.38cm,y=0.38cm]
\diagramms{2,2,2,1}
\node  at (0.5,0.5) {\tiny{1}};
\node  at (1.5,0.5) {\tiny{2}};
\node  at (2.5,0.5) {\tiny{3}};
\node  at (1.5,1.5) {\tiny{3}};
\node  at (0.5,1.5) {\tiny{3}};
\node  at (3.5,0.5) {\tiny{4}};
\node  at (2.5,1.5) {\tiny{6}};
\end{tikzpicture}
\end{tabular}
\right)$
$\;\rightarrow\;$
\begin{tabular}{l}
\begin{tikzpicture}[x=0.37cm,y=0.37cm]
\draw[-](0,0) -- (8,0);
\draw[-](4,4) -- (4,0);
\draw[-](5,3) -- (0,3);
\draw[-](5,1) -- (5,3);
\draw[-](0,2) -- (4,2);
\draw[-](3,0) -- (3,2);
\draw[-](0,4) -- (0,0);
\draw[-](2,0) -- (2,4);
\draw[-](1,0) -- (1,2);
\draw[-](4,1) -- (8,1);
\draw[-](0,4) -- (4,4);
\draw[-](6,0) -- (6,1);
\draw[-](8,0) -- (8,1);
\node  at (0.5,1) {\tiny{1}};
\node  at (1.5,1) {\tiny{1}};
\node  at (2.5,1) {\tiny{2}};
\node  at (3.5,1) {\tiny{2}};
\node  at (5,0.5) {\tiny{3}};
\node  at (3,2.5) {\tiny{3}};
\node  at (1,2.5) {\tiny{3}};
\node  at (7,0.5) {\tiny{4}};
\node  at (3,3.5) {\tiny{5}};
\node  at (1,3.5) {\tiny{5}};
\node  at (4.5,2) {\tiny{6}};
\end{tikzpicture}
\end{tabular}
\end{tabular}
\end{center}
\begin{center}
\begin{tabular}{l}
$\left(
\begin{tabular}{l}
\begin{tikzpicture}[x=0.38cm,y=0.38cm]
\diagramms{2,2}
\node  at (0.5,0.5) {\tiny{1}};
\node  at (1.5,0.5) {\tiny{2}};
\node  at (0.5,1.5) {\tiny{5}};
\node  at (1.5,1.5) {\tiny{5}};
\end{tikzpicture}
,
\begin{tikzpicture}[x=0.38cm,y=0.38cm]
\diagramms{3,3,3,1}

\node  at (0.5,0.5) {\tiny{1}};
\node  at (1.5,0.5) {\tiny{2}};
\node  at (2.5,0.5) {\tiny{3}};
\node  at (1.5,1.5) {\tiny{3}};
\node  at (0.5,1.5) {\tiny{3}};
\node  at (3.5,0.5) {\tiny{4}};
\node  at (2.5,1.5) {\tiny{6}};
\node  at (0.5,2.5) {\tiny{7}};
\node  at (2.5,2.5) {\tiny{7}};
\node  at (1.5,2.5) {\tiny{7}};
\end{tikzpicture}
\end{tabular}
\right)$
$\;\rightarrow\;$
\begin{tabular}{l}
\begin{tikzpicture}[x=0.38cm,y=0.38cm]
\draw[-](0,0) -- (8,0);
\draw[-](4,5) -- (4,0);
\draw[-](5,3) -- (0,3);
\draw[-](5,1) -- (5,5);
\draw[-](0,5) -- (5,5);
\draw[-](0,2) -- (4,2);
\draw[-](0,5) -- (0,0);
\draw[-](3,0) -- (3,2);
\draw[-](2,0) -- (2,5);
\draw[-](1,0) -- (1,2);
\draw[-](4,1) -- (8,1);
\draw[-](0,4) -- (4,4);
\draw[-](0,5) -- (4,5);
\draw[-](6,0) -- (6,1);
\draw[-](8,0) -- (8,1);
\node  at (0.5,1) {\tiny{1}};
\node  at (1.5,1) {\tiny{1}};
\node  at (2.5,1) {\tiny{2}};
\node  at (3.5,1) {\tiny{2}};
\node  at (5,0.5) {\tiny{3}};
\node  at (3,2.5) {\tiny{3}};
\node  at (1,2.5) {\tiny{3}};
\node  at (7,0.5) {\tiny{4}};
\node  at (3,3.5) {\tiny{5}};
\node  at (1,3.5) {\tiny{5}};
\node  at (4.5,2) {\tiny{6}};
\node  at (1,4.5) {\tiny{7}};
\node  at (3,4.5) {\tiny{7}};
\node  at (4.5,4) {\tiny{7}};
\end{tikzpicture}
\end{tabular}
.\end{tabular}
\end{center}

Theorem~\ref{thm1} is formulated in terms of symmetric functions as follows  
 
\begin{thm}
Let $\lambda$ be a partition whose 2-quotient is ($\mu,\nu$). One has
\begin{equation}
\begin{aligned}
\sum_{T;sh(T)=\lambda}x^T = s_{\mu}s_{\nu} 
\end{aligned}
\label{eq:9}
\end{equation}
where the sum runs over all domino tableaux $T$ of shape $\lambda$.
\label{thm2}
\end{thm}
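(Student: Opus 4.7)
The plan is to derive Theorem~\ref{thm2} as an almost immediate corollary of Theorem~\ref{thm1}, the only nontrivial point being that the bijection $\Gamma$ is weight-preserving in the appropriate sense. So the strategy splits into three short steps: (i) observe that $\Gamma$ redistributes labels without altering their multiset; (ii) factor the sum over domino tableaux as a product of two independent sums over Young tableaux using the bijection; (iii) recognize each factor as a Schur function by its combinatorial definition.

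For step (i), I would recall the description of $\Gamma$ given in the excerpt: the algorithm selects, on each diagonal $D_{2k}$ of $T$, the labels carried by dominoes of type~1 (resp.\ type~2) and writes them along $D_k$ of $t_1$ (resp.\ $t_2$). In particular, every label of $T$ appears exactly once among the cells of $t_1$ and $t_2$, and no new labels are created. Hence if $T$ has evaluation $(a_1,a_2,\dots)$ and $(t_1,t_2)=\Gamma(T)$ has evaluations $(b_1,b_2,\dots)$ and $(c_1,c_2,\dots)$, then $a_i = b_i + c_i$ for all $i$, which gives
\begin{equation}
x^T \;=\; x_1^{a_1}x_2^{a_2}\cdots \;=\; \bigl(x_1^{b_1}x_2^{b_2}\cdots\bigr)\bigl(x_1^{c_1}x_2^{c_2}\cdots\bigr) \;=\; x^{t_1}\,x^{t_2}.
\end{equation}

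For step (ii), since $\Gamma$ is a bijection between domino tableaux of shape $\lambda$ and pairs of Young tableaux of shape $(\mu,\nu)$, I can reindex the left-hand sum of~\eqref{eq:9}:
\begin{equation}
\sum_{\substack{T \\ sh(T)=\lambda}} x^T \;=\; \sum_{\substack{(t_1,t_2) \\ sh(t_1)=\mu,\ sh(t_2)=\nu}} x^{t_1} x^{t_2} \;=\; \Bigl(\sum_{\substack{t_1 \\ sh(t_1)=\mu}} x^{t_1}\Bigr)\Bigl(\sum_{\substack{t_2 \\ sh(t_2)=\nu}} x^{t_2}\Bigr).
\end{equation}
Step (iii) is then immediate: by the combinatorial definition of Schur functions recalled in the Background, each factor equals $s_\mu$ and $s_\nu$ respectively, yielding $s_\mu s_\nu$.

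The only step requiring any thought is (i), and even there the verification is essentially bookkeeping: one must confirm that $\Gamma$, as described via deletion of type~1 (resp.\ type~2) labels and diagonal reading, does produce tableaux whose label multisets partition that of $T$. This is built into the very definition of $\Gamma$ recalled just before the statement of Theorem~\ref{thm1}, so no new combinatorial work is needed beyond pointing this out. There is no genuine obstacle in this proof; the content lies entirely in Theorem~\ref{thm1}.
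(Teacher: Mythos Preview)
Your proposal is correct and matches the paper's treatment: the paper does not give a separate proof of Theorem~\ref{thm2} but presents it as the symmetric-function reformulation of Theorem~\ref{thm1}, and the explicit proof it later gives for the shifted analogue (Theorem~\ref{thm5}) follows exactly your three steps---the bijection preserves evaluations, the sum factors, and each factor is recognized as the relevant Schur-type function.
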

 
Let $\lambda$ and $\theta$ be two partitions. Define $K_{\lambda \theta}^{(2)}$ as the number of domino tableaux of shape $\lambda$ and evaluation $\theta$. The sum in Equation~\eqref{eq:9} is a symmetric function in the variables $x_1,x_2,\dots$ whose expansion on the basis of monomial functions is 

\begin{cor}
Given a partition $\lambda$, we have
\begin{equation}
\begin{aligned}
\sum_{T;sh(T)=\lambda}x^T = \sum_{\theta}K_{\lambda \theta}^{(2)}m_{\theta}
\end{aligned}
\end{equation}
where the first sum runs over all domino tableaux $T$ of shape $\lambda$ and the second sum runs over all partitions $\theta$.
\end{cor}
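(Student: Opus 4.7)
The plan is to derive the corollary directly from Theorem~\ref{thm2}. That theorem asserts that the generating function $\sum_{T;sh(T)=\lambda}x^T$ equals the product of Schur functions $s_\mu s_\nu$, and is therefore a symmetric function in the variables $x_1,x_2,\dots$. Since the monomial symmetric functions $\{m_\theta\}$ (indexed by partitions $\theta$) form a basis of the ring of symmetric functions, there is a unique expansion $\sum_T x^T = \sum_\theta c_\theta\, m_\theta$, and it suffices to identify the coefficients $c_\theta$ as the integers $K^{(2)}_{\lambda\theta}$.

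To do this, I would extract the coefficient of the monomial $x^\theta := x_1^{\theta_1}x_2^{\theta_2}\cdots$ from both sides. On the right-hand side, since $\theta$ is already a partition, only the term $c_\theta\, m_\theta$ contributes this particular monomial, and it does so with coefficient $c_\theta$ (because $m_\nu$ for $\nu \neq \theta$ contains no permutation of $x^\theta$). On the left-hand side, by the definition of $x^T$ and of the evaluation of a tableau recalled in Section~2, the coefficient of $x^\theta$ in $\sum_T x^T$ counts exactly the domino tableaux $T$ of shape $\lambda$ whose evaluation is $\theta$, which is precisely $K^{(2)}_{\lambda\theta}$ by definition. Matching the two identifications gives $c_\theta = K^{(2)}_{\lambda\theta}$, which is the desired equality.

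The only conceptual point to check is that writing the sum on the right as $\sum_\theta K^{(2)}_{\lambda\theta} m_\theta$ indexed by \emph{partitions} is consistent: for any composition $\alpha$ obtained by permuting the parts of $\theta$, the number of domino tableaux of shape $\lambda$ and evaluation $\alpha$ must coincide with $K^{(2)}_{\lambda\theta}$. This follows again from Theorem~\ref{thm2}, since $s_\mu s_\nu$ is symmetric, so all monomials $x^\alpha$ with $\alpha$ a permutation of $\theta$ appear with equal coefficient on the left-hand side. There is thus no real obstacle to overcome; the corollary is a straightforward bookkeeping consequence of the symmetry already supplied by Theorem~\ref{thm2}, combined with the triangularity of the monomial basis with respect to the dominant monomial of each $m_\theta$.
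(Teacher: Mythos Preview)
Your proposal is correct and follows exactly the approach indicated in the paper: the paper itself provides no detailed proof, simply noting that the sum in Equation~\eqref{eq:9} is a symmetric function (by Theorem~\ref{thm2}) and that the corollary records its expansion on the monomial basis. Your argument spells out precisely this, correctly identifying the coefficient of $x^{\theta}$ as $K^{(2)}_{\lambda\theta}$ and using symmetry to justify indexing by partitions.
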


The numbers $K_{\lambda \theta}^{(2)}$ are the domino analogues of the Kostka numbers.

From an algebraic point of view, consider the direct product of two plactic monoids on totally ordered infinite alphabets $A_1:=\{a_1^1<a_2^1<a_3^1<\cdots\}$ and $A_2:=\{a_1^2<a_2^2<a_3^2<\cdots\}$. This monoid can be seen as the quotient of the free monoid $(A_1\cup A_2)^*$ generated by the relations
\begin{equation}
\begin{aligned}
&a_j^1a_i^1a_k^1 \equiv a_j^1a_k^1a_i^1\;\;\text{and}\;\;a_j^2a_i^2a_k^2 \equiv a_j^2a_k^2a_i^2 \text{ for } i < j < k,
\\
&a_i^1a_k^1a_j^1 \equiv a_k^1a_i^1a_j^1\;\;\text{and}\;\;a_i^2a_k^2a_j^2 \equiv a_k^2a_i^2a_j^2 \text{ for } i < j < k,
\\
&a_j^1a_j^1a_i^1 \equiv a_j^1a_i^1a_j^1\;\;\text{and}\;\;a_j^2a_j^2a_i^2 \equiv a_j^2a_i^2a_j^2 \text{ for } i < j,
\\
&a_j^1a_i^1a_i^1 \equiv a_i^1a_j^1a_j^1\;\;\text{and}\;\;a_j^2a_i^2a_i^2 \equiv a_i^2a_j^2a_j^2 \text{ for } i < j,
\\
&a_i^1a_j^2\equiv a_j^2a_i^1\;\;\text{ for any positive integers } i \text{ and } j.
\end{aligned}
\end{equation}
This monoid is called the {\em super Plactic monoid} denoted by SPl(A). Theorem~\ref{thm1} shows that the elements of this monoid can be viewed as domino tableaux. For example, the domino tableau of Figure~\ref{fg:4} represents the element $$a_5^2a_3^1a_1^1a_3^2a_1^2a_6^2a_4^1a_3^1a_2^2 a_4^2a_5^2a_2^2a_3^2 \equiv a_3^1a_1^1a_4^1a_3^1a_5^2a_3^2a_1^2a_6^2a_2^2 a_4^2a_5^2a_2^2a_3^2$$ of SPl(A).

\section{Shifted domino tableaux}
A pavable partition $\lambda$ of 2-quotient $(\mu,\nu)$ is a {\em Shifted pavable partition} (ShPP) if and only if it satisfies both conditions:

\begin{itemize}
\item The last parts of $\mu$ and $\nu$ are greater than or equal to their lengths;
\item There is no vertical domino $d$ on $D_0$, such that $d$ has at its left only adjacent dominoes strictly above $D_0$.
\end{itemize}
For example, the paved partition on the right is not a ShPP, but the one on the left is.
\begin{center}
\begin{tikzpicture}[domain=0:2,x=0.48cm,y=0.48cm]
\node  at (6.5,6.5) {$D_0$};
\draw[-](5.25,5.25) -- (6.25,6.25);
\draw[dotted](0,0) -- (5.25,5.25);
\draw[-](0,4) -- (0,5);
\draw[-](0,5) -- (4,5);
\draw[-](2,5) -- (2,4);
\draw[-](4,5) -- (4,3);
\draw[-](4,3) -- (4,5);
\draw[-](4,5) -- (5,5);
\draw[-](5,5) -- (5,3);
\draw[-](0,0) -- (8,0);
\draw[-](0,0) -- (0,4);
\draw[-](2,1) -- (8,1);
\draw[-](0,2) -- (4,2);
\draw[-](0,4) -- (4,4);
\draw[-](0,4) -- (1,4);
\draw[-](1,4) -- (3,4);
\draw[-](8,0) -- (8,1);
\draw[-](6,0) -- (6,1);
\draw[-](4,0) -- (4,2);
\draw[-](1,4) -- (1,0);
\draw[-](2,4) -- (2,0);
\draw[-](4,4) -- (4,2);
\draw[-](3,4) -- (3,4);
\draw[-](5,1) -- (5,3);
\draw[-](5,3) -- (4,3);
\draw[-](2,3) -- (4,3);
\end{tikzpicture}
$\;,\;\;\;\;\;\;\;\;\;$
\begin{tikzpicture}[domain=0:2,x=0.48cm,y=0.48cm]
\fill [color=gray!60](2,2)rectangle(3,4);
\node  at (6.5,6.5) {$D_0$};
\draw[-](5.25,5.25) -- (6.25,6.25);
\draw[dotted](0,0) -- (5.25,5.25);
\draw[-](0,4) -- (0,5);
\draw[-](0,5) -- (4,5);
\draw[-](2,5) -- (2,4);
\draw[-](4,5) -- (4,3);
\draw[-](4,3) -- (4,5);
\draw[-](4,5) -- (5,5);
\draw[-](5,5) -- (5,3);
\draw[-](0,0) -- (8,0);
\draw[-](0,0) -- (0,4);
\draw[-](2,1) -- (8,1);
\draw[-](0,2) -- (4,2);
\draw[-](0,4) -- (4,4);
\draw[-](0,4) -- (1,4);
\draw[-](1,4) -- (3,4);
\draw[-](8,0) -- (8,1);
\draw[-](6,0) -- (6,1);
\draw[-](4,0) -- (4,2);
\draw[-](1,4) -- (1,0);
\draw[-](2,4) -- (2,0);
\draw[-](4,4) -- (4,2);
\draw[-](3,4) -- (3,4);
\draw[-](5,1) -- (5,3);
\draw[-](5,3) -- (4,3);
\draw[-](3,2) -- (3,4);
\end{tikzpicture}
.
\end{center}
 
Given a ShPP $\lambda$, a {\em Shifted Domino Tableau} (ShDT) is a filling of the dominoes of the part $\mathrm{up'}(\lambda)$ of $\lambda$ by $X$ and those of the part $\mathrm{down'}(\lambda)$ by letters in $\{1'<1<2'<2<\cdots\}$ such that
\begin{equation}
\begin{aligned}
&\text{- rows and columns are non-decreasing from left to right and from bottom to top;} 
\\
&\text{- an element of } \{1,2,3,\dots\}\text{ appears at most once in each column;} 
\\
&\text{- an element of } \{1',2',3',\dots\}\text{ appears at most once in each row.}
\end{aligned}
\label{c2}
\end{equation}

\begin{figure}[ht]
\begin{center}
\begin{tikzpicture}[domain=0:2,x=0.48cm,y=0.48cm]
\draw[-](0,4) -- (0,5);
\draw[-](0,5) -- (4,5);
\draw[-](2,5) -- (2,4);
\draw[-](4,5) -- (4,3);
\draw[-](4,3) -- (4,5);
\draw[-](4,5) -- (5,5);
\draw[-](5,5) -- (5,3);
\draw[-](0,0) -- (8,0);
\draw[-](0,0) -- (0,4);
\draw[-](2,1) -- (8,1);
\draw[-](0,2) -- (4,2);
\draw[-](0,4) -- (4,4);
\draw[-](0,4) -- (1,4);
\draw[-](1,4) -- (3,4);
\draw[-](8,0) -- (8,1);
\draw[-](6,0) -- (6,1);
\draw[-](4,0) -- (4,2);
\draw[-](1,2) -- (1,0);
\draw[-](2,4) -- (2,0);
\draw[-](4,4) -- (4,2);
\draw[-](3,4) -- (3,4);
\draw[-](5,1) -- (5,3);
\draw[-](5,3) -- (4,3);
\draw[-](0,3) -- (4,3);
\node  at (0.5,1) {\tiny{1}};
\node  at (1.5,1) {\tiny{1}};
\node  at (3,0.5) {\tiny{2'}};
\node  at (5,0.5) {\tiny{2}};
\node  at (7,0.5) {\tiny{3}};
\node  at (3,1.5) {\tiny{2'}};
\node  at (3,2.5) {\tiny{2'}};
\node  at (3,3.5) {\tiny{4}};
\node  at (3,4.5) {\tiny{X}};
\node  at (4.5,2) {\tiny{5'}};
\node  at (4.5,4) {\tiny{5}};
\node  at (1,4.5) {\tiny{X}};
\node  at (1,2.5) {\tiny{X}};
\node  at (1,3.5) {\tiny{X}};
\end{tikzpicture}
\end{center}
\caption{A shifted domino tableau of shape $(8,5,5,5,5)$.}
\label{fg:7}
\end{figure}

The {\em column reading} and the {\em diagonal reading} of a shifted domino tableau $T$ are defined as for domino tableaux. The diagonal reading of the shifted domino tableau in Figure~\ref{fg:7} is $112'45/2'2'5'/2/3$ and its column rearding is $1142'2'2'55'23$.

Given a domino tableau $T$ of shape $\lambda$, we define $T\backslash \mathrm{up'}(\lambda)$ as the tableau obtained by removing the dominoes of the part $\mathrm{up'}(\lambda)$ from $T$. Two domino tableaux $T_1$ and $T_2$ are {\em $\cong'$-equivalent} if and only if $T_1$ and $T_2$ have the same shape $\lambda$ and $T_1 \backslash \mathrm{up'}(\lambda)$ is equal to $T_2 \backslash \mathrm{up'}(\lambda)$. For example, the two following domino tableaux $T_1$ and $T_2$ are $\cong'$-equivalent:

\begin{center}
\begin{tikzpicture}[x=0.4cm,y=0.4cm]
\node  at (-1.25,1) {$T_1 =$};
\draw[-](0,4) -- (0,5);
\draw[-](0,5) -- (4,5);
\draw[-](2,5) -- (2,4);
\draw[-](4,5) -- (4,3);
\draw[-](4,3) -- (4,5);
\draw[-](4,5) -- (5,5);
\draw[-](5,5) -- (5,3);
\draw[-](0,0) -- (8,0);
\draw[-](0,0) -- (0,4);
\draw[-](2,1) -- (8,1);
\draw[-](0,2) -- (4,2);
\draw[-](0,4) -- (4,4);
\draw[-](0,4) -- (1,4);
\draw[-](1,4) -- (3,4);
\draw[-](8,0) -- (8,1);
\draw[-](6,0) -- (6,1);
\draw[-](4,0) -- (4,2);
\draw[-](0,1) -- (2,1);
\draw[-](2,4) -- (2,0);
\draw[-](4,4) -- (4,2);
\draw[-](3,4) -- (3,4);
\draw[-](5,1) -- (5,3);
\draw[-](5,3) -- (4,3);
\draw[-](0,3) -- (4,3);
\node  at (1,0.5) {\tiny{1}};
\node  at (3,0.5) {\tiny{2}};
\node  at (5,0.5) {\tiny{3}};
\node  at (7,0.5) {\tiny{3}};
\node  at (1,1.5) {\tiny{2}};
\node  at (3,1.5) {\tiny{3}};
\node  at (4.5,2) {\tiny{6}};
\node  at (3,2.5) {\tiny{4}};
\node  at (3,3.5) {\tiny{5}};
\node  at (4.5,4) {\tiny{8}};
\node  at (1,4.5) {\tiny{3}};
\node  at (1,2.5) {\tiny{4}};
\node  at (1,3.5) {\tiny{5}};
\node  at (3,4.5) {\tiny{7}};
\end{tikzpicture}
\begin{tikzpicture}[x=0.4cm,y=0.4cm]
\node  at (0,2) {$\cong'$};
\end{tikzpicture}
\begin{tikzpicture}[x=0.4cm,y=0.4cm]
\node  at (-1.25,1) {$T_2 =$};
\draw[-](0,4) -- (0,5);
\draw[-](0,5) -- (4,5);
\draw[-](2,5) -- (2,4);
\draw[-](4,5) -- (4,3);
\draw[-](4,3) -- (4,5);
\draw[-](4,5) -- (5,5);
\draw[-](5,5) -- (5,3);
\draw[-](0,0) -- (8,0);
\draw[-](0,0) -- (0,4);
\draw[-](2,1) -- (8,1);
\draw[-](0,2) -- (4,2);
\draw[-](0,4) -- (4,4);
\draw[-](0,4) -- (1,4);
\draw[-](1,4) -- (3,4);
\draw[-](8,0) -- (8,1);
\draw[-](6,0) -- (6,1);
\draw[-](4,0) -- (4,2);
\draw[-](0,1) -- (2,1);
\draw[-](2,4) -- (2,0);
\draw[-](4,4) -- (4,2);
\draw[-](3,4) -- (3,4);
\draw[-](5,1) -- (5,3);
\draw[-](5,3) -- (4,3);
\draw[-](0,3) -- (4,3);
\node  at (1,0.5) {\tiny{1}};
\node  at (3,0.5) {\tiny{2}};
\node  at (5,0.5) {\tiny{3}};
\node  at (7,0.5) {\tiny{3}};
\node  at (1,1.5) {\tiny{2}};
\node  at (3,1.5) {\tiny{3}};
\node  at (4.5,2) {\tiny{6}};
\node  at (3,2.5) {\tiny{4}};
\node  at (3,3.5) {\tiny{5}};
\node  at (4.5,4) {\tiny{8}};
\node  at (1,4.5) {\tiny{3}};
\node  at (1,2.5) {\tiny{5}};
\node  at (1,3.5) {\tiny{6}};
\node  at (3,4.5) {\tiny{6}};
\end{tikzpicture},
\end{center}

\noindent since $T_1$ and $T_2$ have the same shape $\lambda$ and

\begin{center}
\begin{tikzpicture}[x=0.4cm,y=0.4cm]
\node  at (-5.75,1) {$T_1 \backslash \mathrm{up'}(\lambda) = T_2 \backslash \mathrm{up'}(\lambda) =$};
\draw[-](4,5) -- (4,3);
\draw[-](4,3) -- (4,5);
\draw[-](4,5) -- (5,5);
\draw[-](5,5) -- (5,3);
\draw[-](0,0) -- (8,0);
\draw[-](0,0) -- (0,2);
\draw[-](2,1) -- (8,1);
\draw[-](0,2) -- (4,2);
\draw[-](2,4) -- (4,4);
\draw[-](2,4) -- (2,4);
\draw[-](2,4) -- (3,4);
\draw[-](8,0) -- (8,1);
\draw[-](6,0) -- (6,1);
\draw[-](4,0) -- (4,2);
\draw[-](0,1) -- (2,1);
\draw[-](2,4) -- (2,0);
\draw[-](4,4) -- (4,2);
\draw[-](3,4) -- (3,4);
\draw[-](5,1) -- (5,3);
\draw[-](5,3) -- (4,3);
\draw[-](2,3) -- (4,3);
\node  at (1,0.5) {\tiny{1}};
\node  at (3,0.5) {\tiny{2}};
\node  at (5,0.5) {\tiny{3}};
\node  at (7,0.5) {\tiny{3}};

\node  at (1,1.5) {\tiny{2}};
\node  at (3,1.5) {\tiny{3}};
\node  at (4.5,2) {\tiny{6}};

\node  at (3,2.5) {\tiny{4}};
\node  at (3,3.5) {\tiny{5}};
\node  at (4.5,4) {\tiny{8}};

\end{tikzpicture}.
\end{center}

We can now state our main result.
\begin{thm}
Let $\lambda$ be a ShPP of 2-quotient $(\mu,\nu)$. The set of ShDT of shape $\lambda$ and the set of pairs $(t_1,t_2)$ of ShYT of shape $(\mu,\nu)$ are in bijection.
\label{thm4}
\end{thm}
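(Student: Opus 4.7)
The plan is to extend the Carré--Leclerc algorithm $\Gamma$ recalled before Theorem~\ref{thm1} to the shifted setting. Denote the extension by $\Gamma'$. It will take a ShDT $T$ of shape $\lambda$ and return a pair $(t_1,t_2)$ of ShYT of shape $(\mu,\nu)$, and its inverse will rebuild $T$ from $(t_1,t_2)$ by a letter-by-letter gluing procedure in the spirit of the one presented in the excerpt for the unshifted case.

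\textbf{Forward map.} On each diagonal $D_{2k}$ of $T$, read the labels from bottom to top and split them into the sub-sequence of labels carried by type~1 dominoes and the sub-sequence carried by type~2 dominoes, keeping primes unchanged. Taken as diagonal readings on the diagonals $D_k$, these sub-sequences define filled shifted diagrams $t_1$ and $t_2$. To show $\Gamma'(T)$ is a valid pair of ShYT, I would verify: (i) the underlying shape of $t_j$ is the $j$-th component of the 2-quotient of $\lambda$; this is the classical part of $\Gamma$, and the first ShPP condition ($\mu_{\ell(\mu)}\ge \ell(\mu)$ and $\nu_{\ell(\nu)}\ge\ell(\nu)$) is exactly what lets us display these diagrams in the "up/down" form introduced in Section~2; (ii) the nondecreasing row/column conditions of~\eqref{c1} on $t_1,t_2$, by a local analysis of how a type~$j$ domino on $D_{2k}$ relates to a type~$j$ domino on $D_{2k\pm 2}$; (iii) the primed/unprimed multiplicity conditions, which should follow because, by~\eqref{c2}, two consecutive dominoes in $T$ with the same primed label must be vertically stacked (hence in the same column of $t_j$) and two with the same unprimed label must be horizontally side by side (hence in the same row of $t_j$), matching the requirements of~\eqref{c1}.

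\textbf{Inverse map.} Starting from $T^{(0)}=\emptyset$ and $(t_1^{(0)},t_2^{(0)})=(\emptyset,\emptyset)$, process the letters $\{1'<1<2'<2<\cdots\}$ in increasing order; at each step, for every cell of $(t_1,t_2)$ containing the current letter $u$ on diagonal $D_k$ of $t_j$, glue a type~$j$ domino labelled $u$ on diagonal $D_{2k}$ of $T^{(i)}$, exactly as in the reverse of $\Gamma$ described after Theorem~\ref{thm1}. I would prove by induction that at each step the intermediate shape remains a ShPP and that the intermediate filling remains a ShDT: the second ShPP condition (forbidding certain vertical dominoes on $D_0$) is precisely the obstruction that an otherwise-legal gluing could create, and the ShYT conditions on $(t_1,t_2)$ are what guarantee that each new domino can be validly appended without violating~\eqref{c2}.

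\textbf{Main obstacle.} The hard part will be the diagonal-by-diagonal verification that $\Gamma'$ and its inverse are genuinely mutually inverse. This reduces to a careful case analysis of how type~1 and type~2 dominoes interact across the diagonal $D_0$; the two ShPP conditions on $\lambda$ were designed to exclude exactly the configurations that would break this correspondence, and the most delicate point is checking that they match one-to-one, on the ShYT side, with the existence of valid shifted shapes for $\mu$ and $\nu$ encoding the placement of dominoes across $D_0$ without ambiguity. Tracking the primed/unprimed alphabet across $D_0$, and in particular ensuring that a vertical domino on $D_0$ always corresponds to an unprimed entry on $D_0$ of the appropriate $t_j$, is the place where the argument has to be most careful.
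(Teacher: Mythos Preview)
Your plan is reasonable and could be carried out, but it differs structurally from the paper's argument. You work \emph{directly} with ShDT (carrying the $X$ labels along) and propose to re-verify all the row/column/prime conditions for $\Gamma'$ from scratch. The paper instead introduces two auxiliary sets $E$ and $F$ and argues via a quotient. Concretely, $E$ consists of pavings of $\lambda$ satisfying~\eqref{c2} in which \emph{every} domino, including those in $\mathrm{up'}(\lambda)$, carries an ordinary letter from $\{1'<1<2'<2<\cdots\}$ (no $X$'s at all); $F$ consists of pairs $(t_1,t_2)$ of shape $(\mu,\nu)$ satisfying~\eqref{c1} together with one extra local condition along $D_0$ (an $\ell_1,\ell_2,\ell_3$ constraint spelled out just before the proof). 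The paper then observes that the \emph{unmodified} Carr\'e--Leclerc map $\Gamma$ already gives a bijection $E\to F$, the only new point being that the second ShPP hypothesis on $\lambda$ (no vertical domino on $D_0$ with only $\mathrm{up'}$ neighbours on its left) is exactly equivalent to the image landing in $F$. Finally it passes to the quotients $E/\!\cong'$ and $F/\!\cong$, which by construction are the set of ShDT and the set of pairs of ShYT respectively. The payoff is that almost nothing has to be re-checked: $\Gamma$ and its inverse are inherited wholesale from the unshifted theorem, and the genuinely new content is compressed into the single sentence matching the ShPP condition with the defining condition of $F$.

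Two specific points where your sketch would need adjustment. First, your inverse map processes the alphabet $\{1'<1<2'<2<\cdots\}$ in order, but you never say when or how the $X$-labelled dominoes of $\mathrm{up'}(\lambda)$ are glued on; the paper sidesteps this entirely by working in $E$ where there are no $X$'s and only passing to $X$'s after quotienting. Second, your ``most delicate point'' --- that a vertical domino on $D_0$ should force an \emph{unprimed} entry on $D_0$ of the corresponding $t_j$ --- is not the right condition here (that restriction is relevant for $P$-Schur, not $Q$-Schur). The actual obstruction the paper isolates is the $\ell_1,\ell_2,\ell_3$ comparison on $F$, and that is what the forbidden-vertical-domino clause in the definition of ShPP is tailored to exclude.
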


Let $\lambda$ be a ShPP of 2-quotient $(\mu,\nu)$. In order to prove Theorem~\ref{thm4}, we introduce two sets $E$ and $F$: Let $E$ denote the set of labeled ShPP by letters in $\{1'<1<2'<2<\cdots\}$, satisfying the conditions~\eqref{c2}. In other words, $E$ is a set of special kind of ShDT, those in which the dominoes of the part $\mathrm{up'}(\lambda)$ are not labeled by $X$ (all the dominoes are labeled in the same way as the part $\mathrm{down'}(\lambda)$). For example, the two domino tableaux below are in $E$:
\begin{center}
\begin{tikzpicture}[domain=0:2,x=0.4cm,y=0.4cm]
\draw[-](0,0) -- (8,0);
\draw[-](4,5) -- (4,0);
\draw[-](5,3) -- (0,3);
\draw[-](5,1) -- (5,5);
\draw[-](0,5) -- (5,5);
\draw[-](0,2) -- (4,2);
\draw[-](0,5) -- (0,0);
\draw[-](2,0) -- (2,5);
\draw[-](1,0) -- (1,2);
\draw[-](2,1) -- (8,1);
\draw[-](0,4) -- (4,4);
\draw[-](0,5) -- (4,5);
\draw[-](6,0) -- (6,1);
\draw[-](8,0) -- (8,1);
\node  at (0.5,1) {\tiny{1}};
\node  at (1.5,1) {\tiny{2'}};
\node  at (3,0.5) {\tiny{3'}};
\node  at (3,1.5) {\tiny{3'}};
\node  at (5,0.5) {\tiny{3}};
\node  at (3,2.5) {\tiny{4}};
\node  at (1,2.5) {\tiny{2}};
\node  at (7,0.5) {\tiny{3}};
\node  at (3,3.5) {\tiny{5}};
\node  at (1,3.5) {\tiny{3}};
\node  at (4.5,2) {\tiny{5'}};
\node  at (1,4.5) {\tiny{4}};
\node  at (3,4.5) {\tiny{6'}};
\node  at (4.5,4) {\tiny{6}};
\end{tikzpicture}
\,,\;\;
\begin{tikzpicture}[domain=0:2,x=0.4cm,y=0.4cm]
\draw[-](0,0) -- (8,0);
\draw[-](4,4) -- (4,0);
\draw[-](5,3) -- (0,3);
\draw[-](5,1) -- (5,3);
\draw[-](0,2) -- (4,2);
\draw[-](0,5) -- (0,0);
\draw[-](2,0) -- (2,5);
\draw[-](1,0) -- (1,2);
\draw[-](2,1) -- (8,1);
\draw[-](0,4) -- (4,4);
\draw[-](0,5) -- (2,5);
\draw[-](6,0) -- (6,1);
\draw[-](8,0) -- (8,1);
\node  at (0.5,1) {\tiny{1}};
\node  at (1.5,1) {\tiny{1}};
\node  at (3,0.5) {\tiny{1}};
\node  at (3,1.5) {\tiny{2'}};
\node  at (5,0.5) {\tiny{1}};
\node  at (3,2.5) {\tiny{3}};
\node  at (1,2.5) {\tiny{2}};
\node  at (7,0.5) {\tiny{2}};
\node  at (3,3.5) {\tiny{6}};
\node  at (1,3.5) {\tiny{3'}};
\node  at (4.5,2) {\tiny{5'}};
\node  at (1,4.5) {\tiny{3}};
\end{tikzpicture}
\end{center}

Following the same idea, we define $F$ as the set of pairs of tableaux $(t_1,t_2)$ of shape $(\mu,\nu)$ labeled by letters in $\{1'<1<2'<2<\cdots\}$, satisfying the conditions~\eqref{c2} and such that when the length of $\mu$ is greater than or equal to 2, if there is a letter $\ell_1$ in $t_1$ on $D_0$ and a letter $\ell_2$ in the same position in $t_2$, with $\ell_1<\ell_2$, then there is a letter $\ell_3$ at the left of $\ell_2$ on the same row, satisfying $\ell_1=\ell_3$ in the case that $\ell_1$ is a primed letter and $\ell_1<\ell_3$ in the other cases. For example, the following two pairs of tableaux are in $F$.
\begin{center}
$\left(
\begin{tabular}{l}
\begin{tikzpicture}[domain=0:2,x=0.4cm,y=0.4cm]
\diagramms{1,1}
\node  at (0.5,0.5) {\tiny{1}};
\node  at (1.5,0.5) {\tiny{2'}};
\end{tikzpicture}
,
\begin{tikzpicture}[domain=0:2,x=0.4cm,y=0.4cm]
\diagramms{3,3,3,1}
\node  at (0.5,0.5) {\tiny{1}};
\node  at (1.5,0.5) {\tiny{2'}};
\node  at (2.5,0.5) {\tiny{2}};
\node  at (3.5,0.5) {\tiny{3'}};
\node  at (0.5,1.5) {\tiny{2}};
\node  at (1.5,1.5) {\tiny{2'}};
\node  at (2.5,1.5) {\tiny{4}};
\node  at (0.5,2.5) {\tiny{5}};
\node  at (1.5,2.5) {\tiny{6}};
\node  at (2.5,2.5) {\tiny{7}};
\end{tikzpicture}
\end{tabular}
\right)$
\,,\;\;
$\left(
\begin{tabular}{l}
\begin{tikzpicture}[domain=0:2,x=0.4cm,y=0.4cm]
\diagramms{3,3,3,1}
\node  at (0.5,0.5) {\tiny{1'}};
\node  at (1.5,0.5) {\tiny{1}};
\node  at (2.5,0.5) {\tiny{1}};
\node  at (3.5,0.5) {\tiny{3}};
\node  at (0.5,1.5) {\tiny{2}};
\node  at (1.5,1.5) {\tiny{4}};
\node  at (2.5,1.5) {\tiny{5'}};
\node  at (0.5,2.5) {\tiny{5}};
\node  at (1.5,2.5) {\tiny{6}};
\node  at (2.5,2.5) {\tiny{7}};
\end{tikzpicture}
,
\begin{tikzpicture}[domain=0:2,x=0.4cm,y=0.4cm]
\diagramms{2,2,2,1}
\node  at (0.5,0.5) {\tiny{2}};
\node  at (1.5,0.5) {\tiny{2}};
\node  at (2.5,0.5) {\tiny{3'}};
\node  at (3.5,0.5) {\tiny{3}};
\node  at (1.5,1.5) {\tiny{6}};
\node  at (0.5,1.5) {\tiny{5}};
\node  at (2.5,1.5) {\tiny{8}};
\end{tikzpicture}
\end{tabular}
\right)$
\end{center}

The pair of tableaux represented below is not in $F$, because in position $(3,3)$ (the shaded cells) we have $8'>7$ but on the left of $8'$, there is no letter greater than $7$.  
\begin{center}
$\left(
\begin{tabular}{l}
\begin{tikzpicture}[domain=0:2,x=0.4cm,y=0.4cm]
\fill [color=gray!60](2,2)rectangle(3,3);
\diagramms{3,3,3,2}
\node  at (0.5,0.5) {\tiny{1}};
\node  at (1.5,0.5) {\tiny{3'}};
\node  at (2.5,0.5) {\tiny{3}};
\node  at (3.5,0.5) {\tiny{4}};
\node  at (0.5,1.5) {\tiny{3}};
\node  at (1.5,1.5) {\tiny{4}};
\node  at (2.5,1.5) {\tiny{5'}};
\node  at (3.5,1.5) {\tiny{6}};
\node  at (0.5,2.5) {\tiny{5}};
\node  at (1.5,2.5) {\tiny{5}};
\node  at (2.5,2.5) {\tiny{7}};
\end{tikzpicture}
,
\begin{tikzpicture}[domain=0:2,x=0.4cm,y=0.4cm]
\fill [color=gray!60](2,2)rectangle(3,3);
\diagramms{4,4,4,1}
\node  at (0.5,0.5) {\tiny{1}};
\node  at (1.5,0.5) {\tiny{1}};
\node  at (2.5,0.5) {\tiny{2}};
\node  at (3.5,0.5) {\tiny{8}};
\node  at (0.5,1.5) {\tiny{5'}};
\node  at (1.5,1.5) {\tiny{5'}};
\node  at (2.5,1.5) {\tiny{6}};
\node  at (0.5,2.5) {\tiny{5}};
\node  at (1.5,2.5) {\tiny{5}};
\node  at (2.5,2.5) {\tiny{8'}};
\node  at (0.5,3.5) {\tiny{6}};
\node  at (1.5,3.5) {\tiny{7}};
\node  at (2.5,3.5) {\tiny{8}};
\end{tikzpicture}
\end{tabular}
\right)$
\end{center}

Note that if we replace the labels of the part $\mathrm{up'}(\lambda)$ of an element of $E$ (resp. the labels of the parts $\mathrm{up}(\mu)$ and $\mathrm{up}(\nu)$ of an element of $F$) by $X$, we obtain a ShDT (resp. a pair of ShYT).

\begin{proof} 
We prove Theorem~\ref{thm4} by using $\Gamma$. Even if the labeling conditions of the elements of $E$ are different from the labeling conditions of domino tableaux, $\Gamma$ associates with each element $T$ of shape $\lambda$ in $E$ a unique pair of tableaux $(t_1,t_2)$ of shape $(\mu,\nu)$ satisfying conditions~\eqref{c2}, and such that when the length of $\mu$ is greater than or equal to 2, if there is a letter $\ell_1$ in $t_1$ on $D_0$ and a letter $\ell_2$ in the same position in $t_2$, with $\ell_1<\ell_2$, then there is a letter $\ell_3$ at the left of $\ell_2$ on the same row, satisfying $\ell_1=\ell_3$ in the case that $\ell_1$ is a primed letter and $\ell_1<\ell_3$ in the other cases. Indeed, if the length of $\mu$ is greater than or equal to 2, and there is a letter $\ell_1$ in $t_1$ on $D_0$ and a letter $\ell_2$ in the same position in $t_2$ such that $\ell_1<\ell_2$, and there is no letter $\ell_3$ on the left of $\ell_2$ satisfying $\ell_1=\ell_3$ in the case that $\ell_1$ is a primed letter and $\ell_1<\ell_3$ in the other cases, that means that we must have a vertical domino of type~1 in $\lambda$ on $D_0$ having only dominoes belonging to $\mathrm{up'}(\lambda)$ on its left, which is impossible since $\lambda$ is a ShPP. Hence, this defines a map from $E$ to $F$. The reverse algorithm associates with each pair of tableaux in $F$ a unique element of $E$, which defines a reverse map from $F$ to $E$. Then the sets $E$ and $F$ are in bijection. 

Recall that $t_1\cong t_2$ if and only if $t_1$ and $t_2$ have same shape $\mu$, and $t_1 \backslash \mathrm{up}(\mu)$ is equal to $t_2 \backslash \mathrm{up}(\mu)$. And $T_1\cong' T_2$ if and only if $T_1$ and $T_2$ have same shape $\lambda$, and $T_1 \backslash \mathrm{up}(\lambda)$ is equal to $T_2 \backslash \mathrm{up}(\lambda)$. Consider two elements $T$ and $T'$ in $E$ which are in the same equivalence class
for the relation $\cong'$. Then, we obtain by $\Gamma$ two pairs of tableaux $(t_1,t_2)$
and $(t_1',t_2')$ in $F$ such that $t_1\cong t_1'$ and $t_2\cong t_2'$. This defines a
map from $E/\cong'$ to $F/\cong$. Conversely, we define a map from $F/\cong$ to $E/\cong'$ by considering two pairs of tableaux $(t_1,t_2)$ and $(t_1',t_2')$ in $F$ which are $\cong$-equivalent. The reverse algorithm of $\Gamma$ gives two elements $T$ and $T'$ of $E$ which are $\cong'$-equivalent. Hence, the sets $E/\cong'$ and $F/\cong$ are in bijection.

The labeling and the relations between elements of the upper parts does not change the result since we are interested only on the lower parts, then we can label all the dominoes of the upper parts by $X$. Therefore, the set of ShDT and set of pairs of ShYT are in bijection.
\end{proof} 

For example, we consider the following ShDT 

\begin{center}
\begin{tikzpicture}[domain=0:2,x=0.48cm,y=0.48cm]
\draw[-](0,4) -- (0,5);
\draw[-](0,5) -- (4,5);
\draw[-](2,5) -- (2,4);
\draw[-](4,5) -- (4,3);
\draw[-](4,3) -- (4,5);
\draw[-](4,5) -- (5,5);
\draw[-](5,5) -- (5,3);
\draw[-](0,0) -- (8,0);
\draw[-](0,0) -- (0,4);
\draw[-](2,1) -- (8,1);
\draw[-](0,2) -- (4,2);
\draw[-](0,4) -- (4,4);
\draw[-](0,4) -- (1,4);
\draw[-](1,4) -- (3,4);
\draw[-](8,0) -- (8,1);
\draw[-](6,0) -- (6,1);
\draw[-](4,0) -- (4,2);
\draw[-](1,2) -- (1,0);
\draw[-](2,4) -- (2,0);
\draw[-](4,4) -- (4,2);
\draw[-](3,4) -- (3,4);
\draw[-](5,1) -- (5,3);
\draw[-](5,3) -- (4,3);
\draw[-](0,3) -- (4,3);
\node  at (0.5,1) {\tiny{1}};
\node  at (1.5,1) {\tiny{1}};
\node  at (3,0.5) {\tiny{2'}};
\node  at (5,0.5) {\tiny{2'}};
\node  at (7,0.5) {\tiny{3}};
\node  at (3,1.5) {\tiny{2'}};
\node  at (3,2.5) {\tiny{2'}};
\node  at (3,3.5) {\tiny{4}};
\node  at (3,4.5) {\tiny{X}};
\node  at (4.5,2) {\tiny{5'}};
\node  at (4.5,4) {\tiny{5}};
\node  at (1,4.5) {\tiny{X}};
\node  at (1,2.5) {\tiny{X}};
\node  at (1,3.5) {\tiny{X}};
\node  at (-2,2.5) {{$T =$}};
\end{tikzpicture}.
\end{center}

Let us illustrate algorithm $\Gamma$ applied to $T$.

\begin{center}
\begin{tikzpicture}[domain=0:2,x=0.39cm,y=0.39cm]
\draw[-](-12,4) -- (-12,5);
\draw[-](-12,5) -- (-8,5);
\draw[-](-10,5) -- (-10,4);
\draw[-](-8,5) -- (-8,3);
\draw[-](-8,3) -- (-8,5);
\draw[-](-8,5) -- (-7,5);
\draw[-](-7,5) -- (-7,3);
\draw[-](-12,0) -- (-4,0);
\draw[-](-12,0) -- (-12,4);
\draw[-](-10,1) -- (-4,1);
\draw[-](-12,2) -- (-8,2);
\draw[-](-12,4) -- (-8,4);
\draw[-](-12,4) -- (-11,4);
\draw[-](-11,4) -- (-9,4);
\draw[-](-4,0) -- (-4,1);
\draw[-](-6,0) -- (-6,1);
\draw[-](-8,0) -- (-8,2);
\draw[-](-11,2) -- (-11,0);
\draw[-](-10,4) -- (-10,0);
\draw[-](-8,4) -- (-8,2);
\draw[-](-9,4) -- (-9,4);
\draw[-](-7,1) -- (-7,3);
\draw[-](-7,3) -- (-8,3);
\draw[-](-12,3) -- (-8,3);
\node  at (-13.5,2.5) {\small{$T=$}};
\draw[dotted](-12,0) -- (-7,5);
\draw[dotted](-12,2) -- (-8 ,6);
\draw[dotted](-12,4) -- (-10 ,6);
\draw[dotted](-10,0) -- (-6 ,4);
\draw[dotted](-8,0) -- (-6 ,2);
\draw[dotted](-6,0) -- (-4 ,2);
\node  at (-10.5,1) {\tiny{1}};
\node  at (-11.5,1) {\tiny{1}};
\node  at (-9,0.5) {\tiny{2'}};
\node  at (-7,0.5) {\tiny{2'}};
\node  at (-5,0.5) {\tiny{3}};
\node  at (-9,1.5) {\tiny{2'}};
\node  at (-9,2.5) {\tiny{2}};
\node  at (-11,3.5) {\tiny{X}};
\node  at (-11,2.5) {\tiny{X}};
\node  at (-9,3.5) {\tiny{4}};
\node  at (-9,4.5) {\tiny{X}};
\node  at (-7.5,2) {\tiny{5'}};
\node  at (-7.5,4) {\tiny{5}};
\node  at (-11,4.5) {\tiny{X}};
\draw[->](-3,3.5) -- (-1,5);
\draw[->](-3,2.5) -- (-1,1);
\draw[->](9.5,8) -- (11,8);
\draw[->](9.5,-1) -- (11,-1);
\draw[dotted](0,-4) -- (5,1);
\draw[dotted](0,-2) -- (4 ,2);
\draw[dotted](0,0) -- (2 ,2);
\draw[dotted](2,-4) -- (6 ,0);
\draw[dotted](4,-4) -- (6 ,-2);
\draw[dotted](6,-4) -- (8 ,-2);
\node  at (-0.15,6) {\small{1}};
\node  at (3,7) {\small{2'}};
\node  at (1,9) {\small{X}};
\node  at (3,9) {\small{4}};
\node  at (4,-4) {\small{2}};
\node  at (6,-4) {\small{3}};
\node  at (4,-2) {\small{5'}};
\node  at (2,-4) {\small{2'}};
\node  at (1,-3) {\small{1}};
\node  at (4,0) {\small{5}};
\node  at (3,-1) {\small{2'}};
\node  at (0,-2) {\small{X}};
\node  at (3,1) {\small{X}};
\node  at (0.5,0.5) {\small{X}};
\draw[dotted](0,6) -- (5,11);
\draw[dotted](0,8) -- (4 ,12);
\draw[dotted](0,10) -- (2 ,12);
\draw[dotted](2,6) -- (6 ,10);
\draw[dotted](4,6) -- (6 ,8);
\draw[dotted](6,6) -- (8 ,8);
\draw[-](13,7) -- (13 ,9);
\draw[-](13,7) -- (15 ,7);
\draw[-](15,9) -- (15 ,7);
\draw[-](15,9) -- (13 ,9);
\draw[-](15,8) -- (13 ,8);
\draw[-](14,7) -- (14 ,9);
\node  at (13.5,8.5) {\tiny{X}};
\node  at (13.5,7.5) {\tiny{1}};
\node  at (14.5,8.5) {\tiny{4}};
\node  at (14.5,7.5) {\tiny{2'}};
\node  at (17,8) {\small{$=t_1$}};
\draw[-](13,0.5) -- (13,-2.5);
\draw[-](13,-2.5) -- (17,-2.5);
\draw[-](13,-1.5) -- (17,-1.5);
\draw[-](13,-0.5) -- (16,-0.5);
\draw[-](13,0.5) -- (16,0.5);
\draw[-](14,0.5) -- (14,-2.5);
\draw[-](15,0.5) -- (15,-2.5);
\draw[-](16,0.5) -- (16,-2.5);
\draw[-](17,-2.5) -- (17,-1.5);
\node  at (13.5,-2) {\tiny{1}};
\node  at (14.5,-2) {\tiny{2'}};
\node  at (15.5,-2) {\tiny{2}};
\node  at (16.5,-2) {\tiny{3}};
\node  at (13.5,-1) {\tiny{X}};
\node  at (14.5,-1) {\tiny{2'}};
\node  at (15.5,-1) {\tiny{5'}};
\node  at (13.5,0) {\tiny{X}};
\node  at (14.5,0) {\tiny{X}};
\node  at (15.5,0) {\tiny{5}};
\node  at (18,-1) {\small{$=t_2$}};
\node  at (18.5,-2.5) {.};
\end{tikzpicture}
\end{center}
We obtain two ShYT $(t_1,t_2)$ of shape $((2,2),(4,3,3))$. Also, the reverse algorithm applied to $(t_1,t_2)$ gives:

\begin{center}
\begin{tabular}{l}
$\left(
\begin{tabular}{l}
\begin{tikzpicture}[x=0.38cm,y=0.38cm]
\diagramms{1}
\node  at (0.5,0.5) {\tiny{1}};
\end{tikzpicture}
$,$
\begin{tikzpicture}[x=0.38cm,y=0.38cm]
\diagramms{1}
\node  at (0.5,0.5) {\tiny{1}};
\end{tikzpicture}
\end{tabular}
\right)
$
$\;\rightarrow\;$
\begin{tabular}{l}
\begin{tikzpicture}[x=0.38cm,y=0.38cm]
\draw[-](0,0) -- (2,0);
\draw[-](2,2) -- (2,0);
\draw[-](0,2) -- (2,2);
\draw[-](0,2) -- (0,0);
\draw[-](1,0) -- (1,2);
\node  at (0.5,1) {\tiny{1}};
\node  at (1.5,1) {\tiny{1}};
\end{tikzpicture}
 
\end{tabular}
,
$\left(
\begin{tabular}{l}
\begin{tikzpicture}[x=0.38cm,y=0.38cm]
\diagramms{1,1}
\node  at (0.5,0.5) {\tiny{1}};
\node  at (1.5,0.5) {\tiny{2'}};
\end{tikzpicture}
$,$
\begin{tikzpicture}[x=0.38cm,y=0.38cm]
\diagramms{1,1}
\node  at (0.5,0.5) {\tiny{1}};
\node  at (1.5,0.5) {\tiny{2'}};
\end{tikzpicture}
\end{tabular}
\right)$
$\;\rightarrow\;$
\begin{tabular}{l}
\begin{tikzpicture}[x=0.38cm,y=0.38cm]
\draw[-](0,0) -- (4,0);
\draw[-](4,2) -- (4,0);
\draw[-](0,2) -- (4,2);
\draw[-](0,2) -- (0,0);
\draw[-](2,0) -- (2,2);
\draw[-](1,0) -- (1,2);
\draw[-](2,1) -- (4,1);
\node  at (0.5,1) {\tiny{1}};
\node  at (1.5,1) {\tiny{1}};
\node  at (3,0.5) {\tiny{2'}};
\node  at (3,1.5) {\tiny{2'}};
\end{tikzpicture}
\end{tabular}
\end{tabular}
,
\end{center}
 
\begin{center}
\begin{tabular}{l}
$\left(
\begin{tabular}{l}
\begin{tikzpicture}[x=0.38cm,y=0.38cm]
\diagramms{1,1}
\node  at (0.5,0.5) {\tiny{1}};
\node  at (1.5,0.5) {\tiny{2'}};
\end{tikzpicture}
$,$
\begin{tikzpicture}[x=0.38cm,y=0.38cm]
\diagramms{2,2,1}
\node  at (0.5,0.5) {\tiny{1}};
\node  at (1.5,0.5) {\tiny{2'}};
\node  at (2.5,0.5) {\tiny{2}};
\node  at (1.5,1.5) {\tiny{2'}};
\node  at (0.5,1.5) {\tiny{X}};
\end{tikzpicture}
\end{tabular}
\right)$
$\;\rightarrow\;$
\begin{tabular}{l}
\begin{tikzpicture}[x=0.38cm,y=0.38cm]
\draw[-](0,0) -- (6,0);
\draw[-](4,3) -- (4,0);
\draw[-](0,2) -- (4,2);
\draw[-](0,3) -- (0,0);
\draw[-](2,0) -- (2,3);
\draw[-](1,0) -- (1,2);
\draw[-](2,1) -- (6,1);
\draw[-](0,3) -- (4,3);
\draw[-](6,0) -- (6,1);
\node  at (0.5,1) {\tiny{1}};
\node  at (1.5,1) {\tiny{1}};
\node  at (3,0.5) {\tiny{2'}};
\node  at (3,1.5) {\tiny{2'}};
\node  at (5,0.5) {\tiny{2}};
\node  at (3,2.5) {\tiny{2'}};
\node  at (1,2.5) {\tiny{X}};
\end{tikzpicture}
\end{tabular}
,
$\left(
\begin{tabular}{l}
\begin{tikzpicture}[x=0.38cm,y=0.38cm]
\diagramms{1,1}
\node  at (0.5,0.5) {\tiny{1}};
\node  at (1.5,0.5) {\tiny{2'}};
\end{tikzpicture}
,
\begin{tikzpicture}[x=0.38cm,y=0.38cm]
\diagramms{2,2,1,1}
\node  at (0.5,0.5) {\tiny{1}};
\node  at (1.5,0.5) {\tiny{2'}};
\node  at (2.5,0.5) {\tiny{2}};
\node  at (1.5,1.5) {\tiny{2'}};
\node  at (0.5,1.5) {\tiny{X}};
\node  at (3.5,0.5) {\tiny{3}};
\end{tikzpicture}
\end{tabular}
\right)$
$\;\rightarrow\;$
\begin{tabular}{l}
\begin{tikzpicture}[x=0.37cm,y=0.37cm]
\draw[-](0,0) -- (8,0);
\draw[-](4,3) -- (4,0);
\draw[-](0,2) -- (4,2);
\draw[-](0,3) -- (0,0);
\draw[-](2,0) -- (2,3);
\draw[-](1,0) -- (1,2);
\draw[-](2,1) -- (8,1);
\draw[-](0,3) -- (4,3);
\draw[-](6,0) -- (6,1);
\draw[-](8,0) -- (8,1);
\node  at (0.5,1) {\tiny{1}};
\node  at (1.5,1) {\tiny{1}};
\node  at (3,0.5) {\tiny{2'}};
\node  at (3,1.5) {\tiny{2'}};
\node  at (5,0.5) {\tiny{2}};
\node  at (3,2.5) {\tiny{2'}};
\node  at (1,2.5) {\tiny{X}};
\node  at (7,0.5) {\tiny{3}};
\end{tikzpicture}
\end{tabular}
\end{tabular}
\end{center}
\begin{center}
\begin{tabular}{l}
$\left(
\begin{tabular}{l}
\begin{tikzpicture}[x=0.38cm,y=0.38cm]
\diagramms{2,2}
\node  at (0.5,0.5) {\tiny{1}};
\node  at (1.5,0.5) {\tiny{2'}};
\node  at (0.5,1.5) {\tiny{X}};
\node  at (1.5,1.5) {\tiny{4}};
\end{tikzpicture}
,
\begin{tikzpicture}[x=0.38cm,y=0.38cm]
\diagramms{2,2,1,1}
\node  at (0.5,0.5) {\tiny{1}};
\node  at (1.5,0.5) {\tiny{2'}};
\node  at (2.5,0.5) {\tiny{2}};
\node  at (1.5,1.5) {\tiny{2'}};
\node  at (0.5,1.5) {\tiny{X}};
\node  at (3.5,0.5) {\tiny{3}};
\end{tikzpicture}
\end{tabular}
\right)$
$\;\rightarrow\;$
\begin{tabular}{l}
\begin{tikzpicture}[x=0.37cm,y=0.37cm]
\draw[-](0,0) -- (8,0);
\draw[-](4,4) -- (4,0);
\draw[-](4,3) -- (0,3);
\draw[-](0,2) -- (4,2);
\draw[-](0,4) -- (0,0);
\draw[-](2,0) -- (2,4);
\draw[-](1,0) -- (1,2);
\draw[-](2,1) -- (8,1);
\draw[-](0,4) -- (4,4);
\draw[-](6,0) -- (6,1);
\draw[-](8,0) -- (8,1);
\node  at (0.5,1) {\tiny{1}};
\node  at (1.5,1) {\tiny{1}};
\node  at (3,0.5) {\tiny{2'}};
\node  at (3,1.5) {\tiny{2'}};
\node  at (5,0.5) {\tiny{2}};
\node  at (3,2.5) {\tiny{2'}};
\node  at (1,2.5) {\tiny{X}};
\node  at (7,0.5) {\tiny{3}};
\node  at (3,3.5) {\tiny{4}};
\node  at (1,3.5) {\tiny{X}};
\end{tikzpicture}
\end{tabular}
,
$\left(
\begin{tabular}{l}
\begin{tikzpicture}[x=0.38cm,y=0.38cm]
\diagramms{2,2}
\node  at (0.5,0.5) {\tiny{1}};
\node  at (1.5,0.5) {\tiny{2'}};
\node  at (0.5,1.5) {\tiny{X}};
\node  at (1.5,1.5) {\tiny{4}};
\end{tikzpicture}
,
\begin{tikzpicture}[x=0.38cm,y=0.38cm]
\diagramms{2,2,2,1}
\node  at (0.5,0.5) {\tiny{1}};
\node  at (1.5,0.5) {\tiny{2'}};
\node  at (2.5,0.5) {\tiny{2}};
\node  at (1.5,1.5) {\tiny{2'}};
\node  at (0.5,1.5) {\tiny{X}};
\node  at (3.5,0.5) {\tiny{3}};
\node  at (2.5,1.5) {\tiny{5'}};
\end{tikzpicture}
\end{tabular}
\right)$
$\;\rightarrow\;$
\begin{tabular}{l}
\begin{tikzpicture}[x=0.37cm,y=0.37cm]
\draw[-](0,0) -- (8,0);
\draw[-](4,4) -- (4,0);
\draw[-](5,3) -- (0,3);
\draw[-](5,1) -- (5,3);
\draw[-](0,2) -- (4,2);
\draw[-](0,4) -- (0,0);
\draw[-](2,0) -- (2,4);
\draw[-](1,0) -- (1,2);
\draw[-](2,1) -- (8,1);
\draw[-](0,4) -- (4,4);
\draw[-](6,0) -- (6,1);
\draw[-](8,0) -- (8,1);
\node  at (0.5,1) {\tiny{1}};
\node  at (1.5,1) {\tiny{1}};
\node  at (3,0.5) {\tiny{2'}};
\node  at (3,1.5) {\tiny{2'}};
\node  at (5,0.5) {\tiny{2}};
\node  at (3,2.5) {\tiny{2'}};
\node  at (1,2.5) {\tiny{X}};
\node  at (7,0.5) {\tiny{3}};
\node  at (3,3.5) {\tiny{4}};
\node  at (1,3.5) {\tiny{X}};
\node  at (4.5,2) {\tiny{5'}};
\end{tikzpicture}
\end{tabular}
\end{tabular}
\end{center}
 
\begin{center}
\begin{tabular}{l}
$\left(
\begin{tabular}{l}
\begin{tikzpicture}[x=0.38cm,y=0.38cm]
\diagramms{2,2}
\node  at (0.5,0.5) {\tiny{1}};
\node  at (1.5,0.5) {\tiny{2'}};
\node  at (0.5,1.5) {\tiny{X}};
\node  at (1.5,1.5) {\tiny{4}};
\end{tikzpicture}
,
\begin{tikzpicture}[x=0.38cm,y=0.38cm]
\diagramms{3,3,3,1}
\node  at (0.5,0.5) {\tiny{1}};
\node  at (1.5,0.5) {\tiny{2'}};
\node  at (2.5,0.5) {\tiny{2}};
\node  at (1.5,1.5) {\tiny{2'}};
\node  at (0.5,1.5) {\tiny{X}};
\node  at (3.5,0.5) {\tiny{3}};
\node  at (2.5,1.5) {\tiny{5'}};
\node  at (0.5,2.5) {\tiny{X}};
\node  at (2.5,2.5) {\tiny{5}};
\node  at (1.5,2.5) {\tiny{X}};
\end{tikzpicture}
\end{tabular}
\right)$
$\;\rightarrow\;$
\begin{tabular}{l}
\begin{tikzpicture}[x=0.38cm,y=0.38cm]
\draw[-](0,0) -- (8,0);
\draw[-](4,5) -- (4,0);
\draw[-](5,3) -- (0,3);
\draw[-](5,1) -- (5,5);
\draw[-](0,5) -- (5,5);
\draw[-](0,2) -- (4,2);
\draw[-](0,5) -- (0,0);
\draw[-](2,0) -- (2,5);
\draw[-](1,0) -- (1,2);
\draw[-](2,1) -- (8,1);
\draw[-](0,4) -- (4,4);
\draw[-](0,5) -- (4,5);
\draw[-](6,0) -- (6,1);
\draw[-](8,0) -- (8,1);
\node  at (0.5,1) {\tiny{1}};
\node  at (1.5,1) {\tiny{1}};
\node  at (3,0.5) {\tiny{2'}};
\node  at (3,1.5) {\tiny{2'}};
\node  at (5,0.5) {\tiny{2}};
\node  at (3,2.5) {\tiny{2'}};
\node  at (1,2.5) {\tiny{X}};
\node  at (7,0.5) {\tiny{3}};
\node  at (3,3.5) {\tiny{4}};
\node  at (1,3.5) {\tiny{X}};
\node  at (4.5,2) {\tiny{5'}};
\node  at (1,4.5) {\tiny{X}};
\node  at (3,4.5) {\tiny{X}};
\node  at (4.5,4) {\tiny{5}};
\end{tikzpicture}
\end{tabular}
.\end{tabular}
\end{center}

In terms of symmetric functions, Theorem~\ref{thm4} becomes   
\begin{thm}
Let $\lambda$ be a ShPP whose 2-quotient is ($\mu,\nu$). One has

\begin{equation}
\begin{aligned}
\sum_{T;sh(T)=\lambda}x^T = Q_{\mu}Q_{\nu} 
\end{aligned}
\label{eq3}
\end{equation}
where the sum runs over all ShDT $T$ of shape $\lambda$.
\label{thm5}
\end{thm}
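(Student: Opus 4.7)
The plan is to use Theorem~\ref{thm4} as the bijective engine and then observe that this bijection is weight-preserving, at which point \eqref{eq3} reduces to the definition of the $Q$-Schur functions. First I would unfold the right-hand side using the definition of the $Q$-Schur function to obtain
\begin{equation*}
Q_{\mu}\, Q_{\nu} \;=\; \sum_{\substack{(t_1,t_2) \\ sh(t_1)=\mu,\, sh(t_2)=\nu}} x^{t_1}\, x^{t_2},
\end{equation*}
so that it suffices to produce a weight-preserving bijection between ShDT of shape $\lambda$ and pairs of ShYT of shape $(\mu,\nu)$.

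Theorem~\ref{thm4} already supplies the bijection: the shifted version of $\Gamma$ sends a ShDT $T$ of shape $\lambda$ to a pair $(t_1,t_2)$ of ShYT of shape $(\mu,\nu)$. The remaining content is to verify that $\Gamma$ preserves the evaluation, that is, $x^T = x^{t_1}\, x^{t_2}$. By construction, $\Gamma$ only acts on the part $\mathrm{down'}(\lambda)$ of $T$: the dominoes of $\mathrm{up'}(\lambda)$ carry the symbol $X$ and correspond on the other side to the cells of $\mathrm{up}(\mu)$ and $\mathrm{up}(\nu)$ also labelled $X$, and these cells contribute nothing to any of the three monomials. On the remaining dominoes, $\Gamma$ reads the labels diagonal by diagonal and transports each label, primed or unprimed, unchanged to a cell of $t_1$ or of $t_2$ according to whether the underlying domino has type~1 or type~2. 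Consequently the multiset of numeric labels of $T$ is the disjoint union of the multisets of labels of $t_1$ and of $t_2$, and therefore $x^T = x^{t_1}\, x^{t_2}$.

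Summing over all ShDT $T$ of shape $\lambda$ and applying the bijection then yields
\begin{equation*}
\sum_{sh(T)=\lambda} x^T \;=\; \sum_{\substack{sh(t_1)=\mu \\ sh(t_2)=\nu}} x^{t_1}\, x^{t_2} \;=\; Q_{\mu}\, Q_{\nu},
\end{equation*}
which is exactly \eqref{eq3}. The only step that could in principle be delicate is the label-preservation check, but this is immediate from the description of $\Gamma$ recalled in Section~2, since $\Gamma$ consists purely of a diagonal-by-diagonal splitting according to domino type with no modification of the labels. Thus the real conceptual work has already been done in Theorem~\ref{thm4}, and no significant obstacle remains.
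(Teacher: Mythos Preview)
Your proof is correct and follows essentially the same approach as the paper: invoke the bijection of Theorem~\ref{thm4}, observe that it preserves evaluations so that $x^T = x^{t_1}x^{t_2}$, and conclude by the definition of the $Q$-Schur functions. The paper's proof is terser, simply stating that the evaluation of $T$ equals the sum of the evaluations of $t_1$ and $t_2$, but your added justification via the description of $\Gamma$ is accurate and in the same spirit.
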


\begin{proof} Let ($\mu,\nu$) be the 2-quotient of a ShPP $\lambda$. From Theorem~\ref{thm4}, it follows that the evaluation of a shifted domino tableau $T$ of shape $\lambda$ is the sum of the evaluations of its corresponding pair of shifted Young tableaux ($t_1,t_2$) of shapes ($\mu,\nu$), thus, one finds

\begin{equation}
\begin{aligned}
\sum_{\substack{\text{T, ShDT} \\ sh(T)=\lambda}}x^T = \sum_{\substack{\text{$t_1$, ShYT} \\ sh(t_1)=\mu}} x^{t_1}\sum_{\substack{\text{$t_2$, ShYT} \\ sh(t_2)=\nu}} x^{t_2}=Q_{\mu}Q_{\nu}. 
\end{aligned}
\end{equation}
\end{proof} 

Let $\lambda$ and $\theta$ be two partitions. Define $K{'}_{\lambda \theta}^{(2)}$ as the number of shifted domino tableaux of shape $\lambda$ and evaluation $\theta$. The sum of Equation~\eqref{eq3} is a symmetric function in the variables $x_1,x_2,\dots$ whose expansion on the basis of monomial functions is given by 
 
\begin{cor}
Let $\lambda$ be a partition. Then
\begin{equation}
\begin{aligned}
\sum_{T;sh(T)=\lambda}x^T = \sum_{\theta}K{'}_{\lambda \theta}^{(2)}m_{\theta},
\end{aligned}
\end{equation}
where the first sum runs over all shifted domino tableaux $T$ of shape $\lambda$ and the second sum runs over all partitions $\theta$.
\end{cor}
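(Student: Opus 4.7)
The plan is to deduce the corollary directly from Theorem~\ref{thm5}. First, I would reduce to the non-trivial case: if $\lambda$ is not a shifted pavable partition, then there are no ShDT of shape $\lambda$, so the left-hand side is $0$; moreover $K'^{(2)}_{\lambda\theta} = 0$ for every partition $\theta$, so the right-hand side also vanishes. Hence I may assume $\lambda$ is a ShPP with $2$-quotient $(\mu,\nu)$.

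Next, I would expand the left-hand side monomial by monomial. Writing each ShDT $T$ of shape $\lambda$ with evaluation $(a_1, a_2, \ldots)$ contributes $x_1^{a_1} x_2^{a_2} \cdots$ to the sum, I group tableaux by their evaluation: for each composition $\alpha = (\alpha_1, \alpha_2, \ldots)$, let $c_\alpha$ denote the number of ShDT of shape $\lambda$ and evaluation $\alpha$. Then
\[
\sum_{\substack{T,\,\mathrm{ShDT} \\ sh(T)=\lambda}} x^T \;=\; \sum_{\alpha} c_\alpha\, x^\alpha,
\]
where $\alpha$ ranges over compositions.

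The key step is to pass from compositions to partitions using the symmetry that Theorem~\ref{thm5} provides. By that theorem, the left-hand side equals $Q_\mu Q_\nu$, which is a symmetric function in $x_1, x_2, \ldots$. Consequently, the coefficient $c_\alpha$ depends only on the partition $\theta$ obtained by sorting $\alpha$ in non-increasing order; by definition this common value is exactly $K'^{(2)}_{\lambda\theta}$. Collecting all compositions that rearrange to the same partition $\theta$ yields the monomial symmetric function $m_\theta$, and gathering terms produces
\[
\sum_{\alpha} c_\alpha\, x^\alpha \;=\; \sum_{\theta} K'^{(2)}_{\lambda\theta}\, m_\theta,
\]
as required.

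There is no real obstacle here; the only point deserving care is the justification that $c_\alpha$ is invariant under permutation of $\alpha$. This does not require any direct combinatorial bijection between ShDT of different evaluations --- it is a purely formal consequence of the symmetry of $Q_\mu Q_\nu$ established in Theorem~\ref{thm5}, together with the elementary fact that a symmetric polynomial's coefficient of $x^\alpha$ is constant on the orbit of $\alpha$ under permutation of variables.
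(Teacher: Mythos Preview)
Your proposal is correct and follows essentially the same approach as the paper, which does not give an explicit proof but simply notes, immediately before the corollary, that the sum in Theorem~\ref{thm5} is a symmetric function whose expansion on the monomial basis yields the stated identity. You have merely spelled out the details of that remark (and handled the trivial non-ShPP case), so there is nothing to add.
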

The numbers $K{'}_{\lambda \theta}^{(2)}$ are the shifted domino analogues of the Kostka numbers.

\begin{rmq}
The same result holds if we consider P-Schur functions rather than Q-Schur functions, by taking a different kind of shifted domino tableaux (those in which the letters on $D_0$ are not allowed to be marked).
\end{rmq}
 
\section{Super shifted plactic monoid}
 
Let $A_1:=\{a_1^1<a_2^1<a_3^1<\cdots\}$ and $A_2:=\{a_1^2<a_2^2<a_3^2<\cdots\}$ be two totally ordered infinite alphabets. The {\em super shifted plactic monoid}, denoted by $\mathrm{SShPl}(A_1,A_2)$ is the quotient of the free monoid $(A_1\cup A_2)^*$ generated by the relations
 
\begin{equation} 
\begin{aligned}
&a_i^1a_j^1a_l^1a_k^1 \sim a_i^1a_l^1a_j^1a_k^1\;\;\text{and}\;\;a_i^2a_j^2a_l^2a_k^2 \sim a_i^2a_l^2a_j^2a_k^2 \text{ for } i \leq j \leq k < l,
\\
&a_i^1a_l^1a_k^1a_j^1 \sim a_i^1a_k^1a_j^1a_l^1\;\;\text{and}\;\;a_i^2a_l^2a_k^2a_j^2 \sim a_i^2a_k^2a_j^2a_l^2 \text{ for } i \leq j < k \leq l,
\\
&a_l^1a_i^1a_k^1a_j^1 \sim a_i^1a_l^1a_k^1a_j^1\;\;\text{and}\;\;a_l^2a_i^2a_k^2a_j^2 \sim a_i^2a_l^2a_k^2a_j^2 \text{ for } i \leq j < k < l,
\\
&a_j^1a_i^1a_l^1a_k^1 \sim a_j^1a_l^1a_i^1a_k^1\;\;\text{and}\;\;a_j^2a_i^2a_l^2a_k^2 \sim a_j^2a_l^2a_i^2a_k^2 \text{ for } i < j \leq k < l,
\\
&a_k^1a_j^1a_l^1a_i^1 \sim a_k^1a_l^1a_j^1a_i^1\;\;\text{and}\;\;a_k^2a_j^2a_l^2a_i^2 \sim a_k^2a_l^2a_j^2a_i^2 \text{ for } i < j < k \leq l,
\\
&a_l^1a_j^1a_k^1a_i^1 \sim a_j^1a_l^1a_k^1a_i^1\;\;\text{and}\;\;a_l^2a_j^2a_k^2a_i^2 \sim a_j^2a_l^2a_k^2a_i^2 \text{ for } i < j \leq k < l,
\\
&a_j^1a_k^1a_l^1a_i^1 \sim a_j^1a_k^1a_i^1a_l^1\;\;\text{and}\;\;a_j^2a_k^2a_l^2a_i^2 \sim a_j^2a_k^2a_i^2a_l^2 \text{ for } i < j \leq k \leq l,
\\
&a_k^1a_i^1a_l^1a_j^1 \sim a_k^1a_l^1a_i^1a_j^1\;\;\text{and}\;\;a_k^2a_i^2a_l^2a_j^2 \sim a_k^2a_l^2a_i^2a_j^2 \text{ for } i \leq j < k \leq l,
\\
&a_i^1a_j^2 \sim a_j^2a_i^1\;\;\text{ for any positive integers } i \text{ and } j.
\end{aligned}
\end{equation}

An equivalence class of $\mathrm{SShPl}(A_1,A_2)$ is called a {\em super shifted plactic class} (SShPC).

\begin{thm}
Each SShPC is represented by a unique shifted domino tableau.
\label{thm6}
\end{thm}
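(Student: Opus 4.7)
The plan is to establish that $\mathrm{SShPl}(A_1,A_2)$ is isomorphic to the direct product $\mathrm{ShPl}(A_1)\times\mathrm{ShPl}(A_2)$ and then to compose with known bijections. First, I would exploit the commutation relations $a_i^1a_j^2\sim a_j^2a_i^1$: starting from any word $w\in(A_1\cup A_2)^*$, these relations allow one to move every letter of $A_2$ past every letter of $A_1$, so $w$ is equivalent to $\pi_1(w)\pi_2(w)$, where $\pi_k$ is the projection keeping only the letters of $A_k$. Once the letters are separated, the relations of $\mathrm{SShPl}(A_1,A_2)$ that can act within $\pi_k(w)$ are exactly the eight families transferred to the alphabet $A_k$, and these are the shifted plactic relations recalled in Section~2.2. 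Hence the projection $w\mapsto(\pi_1(w),\pi_2(w))$ descends to an isomorphism $\mathrm{SShPl}(A_1,A_2)\simeq\mathrm{ShPl}(A_1)\times\mathrm{ShPl}(A_2)$.

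Second, by the main result of \cite{serrano2010shifted}, each class of $\mathrm{ShPl}(A_k)$ is represented by the mixed reading word of a unique ShYT over $A_k$. Consequently, each super shifted plactic class of $\mathrm{SShPl}(A_1,A_2)$ corresponds canonically to a unique pair $(t_1,t_2)$ of ShYT. Applying Theorem~\ref{thm4}, this pair is itself in bijection with a unique ShDT $T$ of shape $\lambda$, where $\lambda$ is the ShPP whose 2-quotient is $(sh(t_1),sh(t_2))$. Composing these two bijections yields the claimed unique ShDT representative of the SShPC.

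Third, to make the correspondence explicit I would define the reading word of a ShDT to be its column reading, with each letter $a$ sitting on a domino of type~$k$ relabelled as $a_a^k$. It then remains to verify that this reading word lies in the SShPC associated to the pair $\Gamma(T)=(t_1,t_2)$; equivalently, after using the commutation relations to rewrite the column reading as $\pi_1\pi_2$, one must check that $\pi_k$ is shifted plactic equivalent to the mixed reading word of $t_k$.

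I expect this last verification to be the main obstacle. The column reading of $T$ interleaves dominoes of the two types in an order dictated by the tiling of $\lambda$, and it is not immediately clear that the induced order on type~$k$ letters coincides, up to shifted plactic equivalence, with the mixed reading word of $t_k$. A natural route is induction on the number of dominoes, using the inverse algorithm of $\Gamma$ from Section~2.4 to insert labels in increasing order and tracking at each step how the newly added letter can be commuted to its correct position using the eight within-alphabet relations together with the commutation rule between $A_1$ and $A_2$.
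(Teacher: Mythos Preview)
Your first two paragraphs are exactly the paper's argument: use the commutation relations to reduce to the restrictions $w_{|A_1}$ and $w_{|A_2}$, invoke Serrano's theorem to get a unique pair $(t_1,t_2)$ of ShYT, and then apply Theorem~\ref{thm4} to obtain a unique ShDT. That is all the paper does.

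Your third paragraph, and the ``main obstacle'' you anticipate, do not appear in the paper. The paper interprets ``represented by'' purely as the bijection just described; it does not claim that the column reading of the ShDT, relabelled by type, lies in the class. In the example immediately after the theorem, the canonical word attached to the ShDT is simply the concatenation of the mixed reading words of $t_1$ and $t_2$, not any intrinsic reading of the domino tableau. So the verification you propose is an extra (and harder) statement that the paper neither asserts nor needs; you can safely drop it and the proof is complete.
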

 
\begin{proof} 
Let $w_1$ and $w_2$ be two words. According to the definition of $\mathrm{SShPl}(A_1,A_2)$, the words $w_1$ and $w_2$ are in the same SShPC if $w_{1|A_1}\equiv w_{2|A_1}$ and $w_{1|A_2}\equiv w_{2|A_2}$, where $w_{|A_1}$ (resp. $w_{|A_2}$) denotes the restriction of a word $w$ to $A_1$ (resp. $A_2$). In other words, two words $w_1$ and $w_2$ are in the same SShPC if $w_{1|A_1}$ and $w_{2|A_1}$ are in the same ShPl class associated to a shifted Young tableau $t_1$ and also $w_{1|A_2}$ and $w_{2|A_2}$ are in the same ShPl class associated to a shifted Young tableau $t_2$. Then, we can associate to each SShPC a unique pair of shifted Young tableaux $(t_1,t_2)$. Thanks to Theorem~\ref{thm4}, we conclude that each SShPC is represented by a unique ShDT.
\end{proof}
 
For example, the following ShDT 

\begin{center}
\begin{tikzpicture}[domain=0:2,x=0.48cm,y=0.48cm]
\draw[-](0,4) -- (0,5);
\draw[-](0,5) -- (4,5);
\draw[-](2,5) -- (2,4);
\draw[-](4,5) -- (4,3);
\draw[-](4,3) -- (4,5);
\draw[-](4,5) -- (5,5);
\draw[-](5,5) -- (5,3);
\draw[-](0,0) -- (8,0);
\draw[-](0,0) -- (0,4);
\draw[-](2,1) -- (8,1);
\draw[-](0,2) -- (4,2);
\draw[-](0,4) -- (4,4);
\draw[-](0,4) -- (1,4);
\draw[-](1,4) -- (3,4);
\draw[-](8,0) -- (8,1);
\draw[-](6,0) -- (6,1);
\draw[-](4,0) -- (4,2);
\draw[-](1,2) -- (1,0);
\draw[-](2,4) -- (2,0);
\draw[-](4,4) -- (4,2);
\draw[-](3,4) -- (3,4);
\draw[-](5,1) -- (5,3);
\draw[-](5,3) -- (4,3);
\draw[-](0,3) -- (4,3);
\node  at (0.5,1) {\tiny{1}};
\node  at (1.5,1) {\tiny{1}};
\node  at (3,0.5) {\tiny{2'}};
\node  at (5,0.5) {\tiny{2}};
\node  at (7,0.5) {\tiny{3}};
\node  at (3,1.5) {\tiny{2'}};
\node  at (3,2.5) {\tiny{2}};
\node  at (3,3.5) {\tiny{4}};
\node  at (3,4.5) {\tiny{X}};
\node  at (4.5,2) {\tiny{5'}};
\node  at (4.5,4) {\tiny{5}};
\node  at (1,4.5) {\tiny{X}};
\node  at (1,2.5) {\tiny{X}};
\node  at (1,3.5) {\tiny{X}};
\end{tikzpicture}
\end{center}

\noindent represents the super shifted plactic class of $w=a_2^1a_4^1a_1^1a_2^2a_5^2a_5^2a_2^2a_2^2a_1^2a_3^2$, where $w$ is the word obtained by the concatenation of the mixed reading words of $t_1$ and $t_2$ (see \cite{serrano2010shifted}).

We know that if two words are shifted plactic equivalent, then they are plactic equivalent. Consequently, we have the following proposition:

\begin{prop}
The super shifted plactic equivalence is a refinement of the super plactic equivalence. Each super plactic
class is a disjoint union of super shifted plactic classes. In other words, if two words are super shifted
plactic equivalent, then they are super plactic equivalent.
\end{prop}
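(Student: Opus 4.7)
The plan is to reduce the statement to Serrano's analogous result for the one-alphabet case, namely that ShPl-equivalence refines Pl-equivalence, by splitting words along the two alphabets.

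First I would establish the following structural lemma, which is essentially the content already used in the proof of Theorem~\ref{thm6}: for two words $w, w' \in (A_1\cup A_2)^*$, one has $w \sim w'$ in $\mathrm{SShPl}(A_1,A_2)$ if and only if $w_{|A_1} \sim w'_{|A_1}$ in $\mathrm{ShPl}(A_1)$ and $w_{|A_2} \sim w'_{|A_2}$ in $\mathrm{ShPl}(A_2)$. The forward direction is immediate because every defining relation of $\mathrm{SShPl}(A_1,A_2)$ either acts inside a single alphabet (and hence preserves the restrictions) or is a pure commutation $a_i^1 a_j^2 \sim a_j^2 a_i^1$ (which also preserves the restrictions). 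The converse uses the commutation relations to reorder each word into a canonical form $u \cdot v$ with $u \in A_1^*$ and $v \in A_2^*$, and then applies the within-alphabet shifted plactic relations to $u$ and $v$ independently. The exact same argument, carried out with the Knuth relations instead of the shifted Knuth relations, gives the parallel statement for $\mathrm{SPl}(A_1\cup A_2)$: $w \equiv w'$ in $\mathrm{SPl}$ if and only if $w_{|A_1} \equiv w'_{|A_1}$ and $w_{|A_2} \equiv w'_{|A_2}$ in the usual plactic monoids on $A_1$ and $A_2$.

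Once both characterizations are in hand, the proof is a one-line assembly. Suppose $w \sim w'$ in $\mathrm{SShPl}(A_1,A_2)$. By the first characterization, $w_{|A_1} \sim w'_{|A_1}$ and $w_{|A_2} \sim w'_{|A_2}$ in the shifted plactic monoids. By Serrano's theorem (recalled in the sentence preceding the proposition), shifted plactic equivalence is a refinement of plactic equivalence, so $w_{|A_1} \equiv w'_{|A_1}$ and $w_{|A_2} \equiv w'_{|A_2}$ in the ordinary plactic monoids. Applying the second characterization in the reverse direction yields $w \equiv w'$ in $\mathrm{SPl}(A_1\cup A_2)$, which is exactly the claim.

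The only nontrivial step is the structural lemma, and within it the converse direction: transforming a given word into the canonical form $u \cdot v$ while preserving its equivalence class. I expect this to be the main obstacle, although it is really only a bookkeeping argument: the commutation relation $a_i^1 a_j^2 \sim a_j^2 a_i^1$ is symmetric and applies for all $i,j$, so one can push every $A_1$-letter to the left past every $A_2$-letter without altering the $A_1$- and $A_2$-restrictions. One should note that the same bubbling is available in $\mathrm{SPl}$ with exactly the analogous commutation $a_i^1 a_j^2 \equiv a_j^2 a_i^1$, so the two characterizations have strictly parallel proofs, and the implication SShPl-equivalence $\Rightarrow$ SPl-equivalence follows. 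The second sentence of the proposition (each super plactic class is a disjoint union of super shifted plactic classes) is then a restatement of the fact that $\sim$ refines $\equiv$.
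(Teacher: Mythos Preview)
Your proposal is correct and matches the paper's approach: the paper does not give a formal proof of the proposition but simply precedes it with the sentence ``We know that if two words are shifted plactic equivalent, then they are plactic equivalent. Consequently, we have the following proposition,'' relying implicitly on the very restriction-to-$A_1$/$A_2$ decomposition (already used in the proof of Theorem~\ref{thm6}) that you spell out. Your write-up is just the explicit version of that one-line deduction.
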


\footnotesize

\end{document}